\newcommand{\wPsi}{\widetilde{\Psi}}
\newcommand{\pz}{pr}
\newcommand{\Diff}{\mathrm{Diff}}
\newcommand{\tGamma}{\Lambda}
\newcommand{\vGamma}{\overline{\Gamma}}
\newcommand{\length}{\ell}
\newcommand{\jmap}{\jmath}
\newcommand{\vg}{{\overline{Y}}}
\newcommand{\ra}{\rightarrow}
\newcommand{\Acal}{\mathcal{A}}
\newcommand{\Ecal}{\mathcal{E}}
\newcommand{\Fcal}{\mathcal{F}}
\newcommand{\Gcal}{\mathcal{G}}
\newcommand{\Hcal}{\mathcal{H}}
\newcommand{\Lcal}{\mathcal{L}}
\newcommand{\Pcal}{\mathcal{P}}
\newcommand{\Rcal}{\mathcal{R}}
\newcommand{\Ucal}{\mathcal{U}}
\newcommand{\Vcal}{\mathcal{V}}
\newcommand{\Wcal}{\mathcal{W}}
\newcommand{\Xcal}{\mathcal{X}}
\newcommand{\Ycal}{\mathcal{Y}}
\newcommand{\Zcal}{\mathcal{Z}}
\newcommand{\Ker}{\mathrm{Ker}}
\newcommand{\Coker}{\mathrm{Coker}}
\newtheorem{thm}{Theorem}[section]
\newtheorem{prop}[thm]{Proposition}
\newtheorem{cor}[thm]{Corollary}
\theoremstyle{definition}
\newtheorem{defn}[thm]{Definition}
\begin{document}
\title[Counting closed geodesics on Riemannian manifolds]
{Counting closed geodesics on Riemannian manifolds }%
\author{Eaman Eftekhary}%
\address{School of Mathematics, Institute for Research in 
Fundamental Sciences (IPM), P. O. Box 19395-5746, Tehran, Iran}%
\email{eaman@ipm.ir}
\begin{abstract}
Fix a smooth closed manifold $M$. Let $\Rcal_M$ denote the space of all pairs $(g,L)$ such that $g$ is a $C^3$ Riemannian metric on $M$ and $L\in\R$ is not the length of any closed $g$-geodesics. A locally constant {\emph{geodesic count function}} $\pi_M:\Rcal_M\ra \Z$ is constructed. For this purpose, the {\emph{weight}} of   compact open subsets of the space of  closed $g$-geodesics is defined and investigated for an arbitrary Riemannian metric $g$. 
\end{abstract}
\maketitle
\section{Introduction}
Closed geodesics on negatively curved Riemannian manifolds have been investigated for decades. Philips and Sarnak obtained the asymptotics of the count function over manifolds with constant negative curvature \cite{PS}. Their correspondence between the count function and the topological entropy of the metric was  generalized to manifolds with variable negative curvature by Margulis \cite{Margulis}. The result was sharpened by Anantharaman \cite{Anan} and by Pollicott and Sharp \cite{PS-1}.\\

 There are serious obstacles when the same problem is studied over Riemannian manifolds with arbitrary curvature. In particular,  the space of closed geodesics is not discrete for an arbitrary metric and may include infinite families. Although such infinite families correspond to discrete sets of closed geodesics when the metric is perturbed to a {\emph{generic}} metric in its vicinity, different perturbations give structurally different discrete sets of closed geodesics. The goal of this paper is a careful comparison between such generic perturbations, which leads to the definition of a  count function satisfying certain natural properties (see Theorem~\ref{thm:main-intro} and Corollary~\ref{cor:main-intro} below). \\

Fix a smooth closed manifold $M$ of dimension $n$  throughout the paper. The space of $C^3$ Riemannian metrics on $M$ is denoted by $\Gcal$ and the space of $W^{2,2}$ maps from $S^1$ to $M$ is denoted by $\Xcal$. Let $\Zcal\subset\Gcal\times \Xcal$ consist of all pairs $(g,\gamma)$ such that $\gamma$ is a $g$-geodesic, and  $\Ycal\subset\Zcal$ consist of $(g,\gamma)$ such that $\gamma$ is not multiply covered. Abraham showed that $\Ycal$ is a separable $C^1$ Banach manifold \cite{Abraham}. If $\pz:\Zcal\ra\Gcal$ and $p=\pz|_{\Ycal}:\Ycal\ra \Gcal$ are the projection maps,  let $\Zcal(g)=\pz^{-1}(g)$ denote the space of all closed $g$-geodesics and $\Ycal(g)=p^{-1}(g)\subset \Zcal(g)$ be the ones which are not multiply covered. $\Ycal(g)$ and $\Gcal(g)$ are usually identified as subsets of $\Xcal$. Abraham also proved that for a generic (or {\emph{bumpy}}) metric $g$ in $\Gcal$, there are no non-trivial Jacobi vector fields along closed $g$-geodesics \cite{Abraham} (see also \cite{White-1}). Jacobi fields along $\gamma\in\Zcal(g)$ describe the closed geodesics which are {infinitesimally close} to $\gamma$. In particular, trivial Jacobi fields along $\gamma$ (i.e. constant multiples of $\dot\gamma$) correspond to the rotation action of $S^1$ on $\Zcal$. The orbit of $(g,\gamma)\in\Zcal$ under this action is denoted by $(g,[\gamma])$. Abraham's result implies that for a generic (bumpy) metric $g$, the quotient $\widetilde{\Zcal}(g)=\Zcal(g)/S^1$ is discrete.\\

However, $\widetilde{\Zcal}(g)$ is not discrete for  arbitrary  $g\in\Gcal$, and may include infinite families. To define the {\emph{geodesic count function}} $\pi_M$,  the {\emph{weight}} $n(g,\Gamma)$ of a compact and open subset $\Gamma$ of $\Zcal(g)$ should be defined. If $g'$ is a bumpy metric which is sufficiently close to $g$, $\Gamma$ determines a compact and open subset $\Gamma'\subset\Zcal(g')$ such that $\Gamma'/S^1$ is finite. The number of geodesics in $\Gamma'/S^1$ (counted with appropriate signs) is one first candidate for  $n(g,\Gamma)$. Nevertheless, even the parity of the number of points in $\Gamma'/S^1$ depends on the choice of $g'$ near $g$,  based on a Kuranishi model developed in Section~\ref{sec:invariance}. This may be linked to the fact that $\pz:\Zcal\ra\Gcal$ is not locally proper. \\

Besides defining  $n(g,\Gamma)$ in the above context, one also needs a notion of invariance when $g$ moves in $\Gcal$. Given $g_0,g_1\in\Gcal$, let $\Pcal_{g_0,g_1}$ denote the space of paths $\vg:[0,1]\ra \Gcal$ with $\vg(0)=g_0$ and $\vg(1)=1$, which are of class $C^3$ as $2$-tensors on $[0,1]\times M$. Let $\Zcal_{g_0,g_1}$ and $\Ycal_{g_0,g_1}$ be the set of all triples  
\[(\vg,t,\gamma)\in\Pcal_{g_0,g_1}\times[0,1]\times\Xcal\] 
 such that $(\vg(t),\gamma)$ is in $\Zcal$ and $\Ycal$, respectively.  The fibers of $\Zcal_{g_0,g_1}$ and $\Ycal_{g_0,g_1}$ over  $\vg\in\Pcal_{g_0,g_1}$ are denoted by  
\[\Zcal(\vg),\Ycal(\vg)\subset [0,1]\times \Xcal,\] 
respectively. Informally, $\Zcal(\vg)$  is a {\emph{cobordism}} connecting  $\Zcal(g_0)$ to $\Zcal(g_1)$  and  $\Ycal(\vg)$ is a {\emph{cobordism}} connecting  $\Ycal(g_0)$ to $\Ycal(g_1)$.
\begin{thm}\label{thm:main-intro}
For every Riemnnian metric $g\in\Gcal$ and every open and compact subset $\Gamma$ of $\Zcal(g)$, the weight $n(g,\Gamma)\in\Z$ of $\Gamma$  may be defined so that:
\begin{itemize}
	\item If $g$ is a bumpy metric and $\gamma\in\Zcal(g)$ is not multiply covered,  $n(g,[\gamma])=\pm 1$. Further, if $g$ is negatively curved near $\gamma$,  $n(g,[\gamma])=1$ and for every multiple cover $\gamma'$ of $\gamma$, $n(g,[\gamma'])=0$.
	\item If $\vg$ is a path in $\Pcal_{g_0,g_1}$ and  $\vGamma\subset\Zcal(\vg)$ is  open and compact,  then for $j=0,1$ the subset $\Gamma_j=\vGamma\cap\left(\{j\}\times\Xcal\right)$ of $\Zcal(g_j)$ is open and compact, and we have $n(g_0,\Gamma_0)=n(g_1,\Gamma_1)$.
\end{itemize}
\end{thm}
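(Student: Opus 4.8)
We must both construct the integer-valued weight $n(g,\Gamma)\in\Z$ and verify the two bullets; the second of them — cobordism invariance — is simultaneously the analytic heart of the matter and the device that makes the construction well posed.

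\textbf{Construction of the weight.} Suppose first that $g$ is bumpy. Then, by Abraham's theorem, $\widetilde{\Zcal}(g)=\Zcal(g)/S^1$ is discrete, so a compact open $\Gamma\subset\Zcal(g)$ is a finite disjoint union of $S^1$-orbits $[\gamma]$, and one sets $n(g,\Gamma)=\sum_{[\gamma]\subset\Gamma}n(g,[\gamma])$ with $n(g,[\gamma]):=0$ whenever $\gamma$ is multiply covered and $n(g,[\gamma]):=\epsilon(\gamma)\in\{\pm1\}$ otherwise. The sign $\epsilon(\gamma)$ is read off from the linearized geodesic equation at $\gamma$: writing the equation as the vanishing of a section of a Banach bundle over $\Xcal$ (whose fibre at $\gamma$ is the codimension-one subbundle $\{\xi:\int_{S^1}\langle\xi,\dgam\rangle\,ds=0\}$ of $L^2$-sections of $\gamma^*TM$, which indeed contains $\nabla_s\dgam$ for every loop $\gamma$), the fibrewise derivative is the Jacobi operator $\Jcal_\gamma v=\nabla_s^2v+R(v,\dgam)\dgam$, Fredholm of index $1$, with kernel $\Rbb\,\dgam$ and — because $g$ is bumpy — vanishing cokernel; on the quotient $\widetilde\Ycal:=\Ycal/S^1$ it becomes invertible, and $\epsilon(\gamma)$ compares the orientation it induces on $\widetilde\Ycal$ at $[\gamma]$ with a fixed orientation of $\Gcal$ (equivalently $\epsilon(\gamma)=(-1)^{\ind\gamma}$ up to one global sign, $\ind\gamma$ the Morse index). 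This already yields the first bullet: if $g$ has negative sectional curvature along $\gamma$ there are no conjugate points, the index form $\int_{S^1}\!\big(|\nabla_sv|^2-\langle R(v,\dgam)\dgam,v\rangle\big)\,ds$ is positive definite on sections orthogonal to $\dgam$ for $\gamma$ and for every iterate, so $\ind\gamma=0$, $\epsilon(\gamma)=+1$, and $n(g,[\gamma'])=0$ for each multiple cover $\gamma'$. For an \emph{arbitrary} $g$ one picks a bumpy $g'$ that is $C^3$-close to $g$; since $\Gamma$ is compact and open and closed geodesics of bounded length form a compact set uniformly as the metric varies, $\Zcal(g')$ carries for $g'$ close enough a canonical compact open \emph{continuation} $\Gamma'$ of $\Gamma$, and one declares $n(g,\Gamma):=n(g',\Gamma')$.

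\textbf{Reduction to the cobordism statement.} That $g\mapsto n(g,\Gamma')$ is independent of the bumpy $g'$ chosen near $g$ is exactly an instance of the second bullet, applied to a short path in $\Pcal_{g'_0,g'_1}$ that stays near $g$, along which $\Gamma$ propagates to a compact open $\vGamma\subset\Zcal(\vg)$ with end slices the two continuations. So the whole theorem reduces to the second bullet; moreover, by continuing $\vGamma$ along short paths joining the endpoints of $\vg$ to nearby bumpy metrics, it suffices to treat a path whose endpoints are already bumpy and whose interior, after a further generic perturbation rel endpoints, is as generic as we wish. Finally, that $\Gamma_j=\vGamma\cap(\{j\}\times\Xcal)=\vGamma\cap\Zcal(g_j)$ is compact and open in $\Zcal(g_j)$ is immediate from $\vGamma$ being compact and open in $\Zcal(\vg)$.

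\textbf{The cobordism argument.} By Abraham-type transversality together with the Sard--Smale theorem, for $\vg$ in a comeager subset of $\Pcal_{g_0,g_1}$ (endpoints fixed and bumpy) the non-multiply-covered locus $\Ycal(\vg)$ is a $2$-dimensional $C^1$ manifold carrying a free $S^1$-action, so $\widetilde\Ycal(\vg)=\Ycal(\vg)/S^1$ is a $1$-manifold over $[0,1]$, and the part of it inside $\vGamma$ is a $1$-manifold whose boundary over $\{0,1\}$ consists of the primitive orbits of $\Gamma_0$ and of $\Gamma_1$. The orientation supplied by the determinant line makes it coherently oriented, its boundary contributes $n(g_0,\Gamma_0)$ and $-n(g_1,\Gamma_1)$ by the construction of the signs, and a compact oriented $1$-manifold has signed boundary $0$; hence $n(g_0,\Gamma_0)=n(g_1,\Gamma_1)$ would follow at once \emph{if} this $1$-manifold were compact. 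It is compact near $t=0$ and $t=1$ (there $g_j$ is bumpy, so every closed geodesic close to one of ours is primitive and nondegenerate), but in the interior it may fail to be: a family of primitive closed geodesics along the path can converge, in $W^{2,2}$, onto a multiply covered geodesic in $\Zcal(\vg)\setminus\Ycal(\vg)$, so at finitely many interior parameters $\widetilde\Ycal(\vg)$ acquires ends escaping into the multiply covered stratum. The theorem thus comes down to showing that the signed total of these escaping ends is $0$.

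\textbf{The main obstacle.} This last point — the genuine difficulty, caused precisely by the non-properness of $\pz$ emphasized in the introduction, equivalently the failure of $\Ycal$ to be closed in $\Zcal$ — is handled by the Kuranishi model of Section~\ref{sec:invariance}. Near a compact family of (possibly multiply covered) closed geodesics one builds a finite-dimensional, $S^1$-equivariant reduction of the geodesic equation, so that the local shape of $\Zcal(\vg)$, and in particular the ends of the primitive $1$-manifold escaping into a $k$-fold cover $[\eta^k]$, are governed by the zero set of a finite-dimensional section carrying the residual $\Zbb/k$-symmetry of the cover; that symmetry organizes the escaping ends into sign-cancelling packets, so their signed total vanishes. (The same model is what shows that weighting multiply covered orbits by $0$ is consistent, and that perturbing a degenerate family to a bumpy metric can change the naive \emph{unsigned} orbit count — even its parity — only through such $\Zbb/k$-symmetric packets, as asserted in the introduction.) I expect essentially all of the work to be concentrated here: constructing the Kuranishi reduction $S^1$-equivariantly and with enough naturality to follow it along the path, locating the escaping ends inside it, and carrying out the symmetry-cancellation computation; the remaining ingredients are standard transversality (Abraham, Sard--Smale), elliptic regularity, and the boundary count for compact oriented $1$-manifolds.
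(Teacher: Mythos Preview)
Your overall architecture---define signs for bumpy metrics, extend by perturbation, and prove invariance via a cobordism of primitive geodesics, with a Kuranishi model handling the ends that escape into the multiply covered stratum---matches the paper. But there is a genuine gap in the specific weight you assign and in what you expect the Kuranishi model to deliver.

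You set $n(g,[\gamma])=0$ for \emph{every} multiply covered $\gamma$ and then assert that the ends of $\widetilde\Ycal(\vg)$ escaping into a $k$-fold cover come in ``sign-cancelling packets'' forced by the residual $\Z/k$-symmetry. This is false already for $k=2$. In the paper's model for the $-1$-Jacobi field bifurcation (Case~1 of Theorem~\ref{thm:invariance-count-function}), the reduced equation is $F(s,r)=r\big((s-t)f(s,r)-r^2g(r)\big)$ with $f(t,0),g(0)\neq0$; the branch $r=0$ is the family of double covers $\gamma_s\circ\varphi_2$, while the parabola $(s-t)f=r^2g$ gives, \emph{after} the $\Z/2$-quotient $r\leftrightarrow -r$, exactly \emph{one} new primitive geodesic $\gamma'_s$ defined on a one-sided interval $J$ and merging into $\gamma_t\circ\varphi_2$. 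There is no partner end to cancel it. Meanwhile $\epsilon_2(g_s,\gamma_s)$ flips sign across $t$ while $\epsilon_1(g_s,\gamma_s)$ does not, and one computes $\epsilon_1(g_s,\gamma'_s)=-\epsilon_2(g_s,\gamma_s)$. With your weights the total count jumps by $\pm1$ at $t$.

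The paper's remedy is not a more refined cancellation argument but a different \emph{definition}: double covers carry the generally nonzero weight $n_2(g,\gamma)=(\epsilon_2(g,\gamma)-\epsilon_1(g,\gamma))/2$ (Definition~\ref{defn:sign}), and only covers of degree $\geq 3$ are weighted $0$. With this choice the sign flip in $\epsilon_2$ along the persistent branch $\gamma_s$ exactly compensates the birth or death of the single primitive $\gamma'_s$ (and of its own double cover), so the count is constant across $t$. The first bullet of the theorem is unaffected: in the negatively curved case $\epsilon_1=\epsilon_2=1$, hence $n_2=0$ there, which is all that bullet claims.
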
	

Let $\length_g:\Zcal(g)\ra \R$ measure the lengths of closed geodesics and $\Lcal_g$ denote the closed image of $\length_g$, which has zero Lebesgue measure \cite[Theorem~16.2]{Milnor-MT}.  Let $\Rcal_M\subset\Gcal\times\R$ consist of all pairs $(g,L)$ with $L\notin\Lcal_g$. $\Gamma_{g,L}=\length_g^{-1}(0,L)$ is a compact and open  subset of $\Zcal(g)$ for $(g,L)\in\Rcal_M$. The {\emph{geodesic count function}} $\pi_M:\Rcal_M\ra\Z$ is defined by  $\pi_M(g,L)=n(g,\Gamma_{g,L})$.  Theorem~\ref{thm:main-intro} implies:

\begin{cor}\label{cor:main-intro} 
The {{geodesic count function}} $\pi_M:\Rcal_M\ra \Z$  is  locally constant on $\Rcal_M$.
\end{cor}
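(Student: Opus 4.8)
The plan is to deduce Corollary~\ref{cor:main-intro} directly from Theorem~\ref{thm:main-intro} by exhibiting, for each point $(g_0,L_0)\in\Rcal_M$, a neighborhood on which $\pi_M$ is constant. Fix such a point. Since $L_0\notin\Lcal_{g_0}$ and $\Lcal_{g_0}$ is closed, there is an $\epsilon>0$ with $[L_0-\epsilon,L_0+\epsilon]\cap\Lcal_{g_0}=\emptyset$; in particular $\Gamma_{g_0,L_0}=\length_{g_0}^{-1}(0,L_0)$ is compact and open in $\Zcal(g_0)$, and it coincides with $\length_{g_0}^{-1}(0,L)$ for every $L\in(L_0-\epsilon,L_0+\epsilon)$. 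The first step is therefore to show that for $g_1$ sufficiently $C^3$-close to $g_0$ and $L_1\in(L_0-\epsilon,L_0+\epsilon)$, the pair $(g_1,L_1)$ still lies in $\Rcal_M$ and that $\pi_M(g_1,L_1)=\pi_M(g_0,L_0)$.

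The key construction is a cobordism realizing the invariance clause of Theorem~\ref{thm:main-intro}. Given $g_1$ close to $g_0$, take the linear path $\vg(t)=(1-t)g_0+tg_1$ in $\Pcal_{g_0,g_1}$; for $g_1$ close enough this is a path of Riemannian metrics and is $C^3$ as a $2$-tensor on $[0,1]\times M$. Inside the cobordism $\Zcal(\vg)\subset[0,1]\times\Xcal$ one considers the subset $\vGamma$ of those $(t,\gamma)$ with $\length_{\vg(t)}(\gamma)<L_0$ (or $<L_1$ after also moving $L$; one can handle the metric and the length bound in two separate steps, first varying $L$ with $g$ fixed, which is trivial since $\Gamma_{g,L}$ is literally unchanged for $L$ in a gap of $\Lcal_g$, then varying $g$). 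The crucial point is that $\vGamma$ is open and compact in $\Zcal(\vg)$: openness is clear because $\length$ is continuous and $L_0$ is not attained, and compactness requires knowing that no geodesic of length near $L_0$ appears for any $g$ near $g_0$. This is where one invokes that $\length_{g_0}^{-1}[L_0-\epsilon,L_0+\epsilon]=\emptyset$ together with an upper-semicontinuity / properness statement for the family of short closed geodesics under $C^3$-small perturbations of the metric — this should follow from the standard compactness for geodesics of bounded length (Arzelà–Ascoli applied to $W^{2,2}$ curves with a uniform length bound, using that the metrics converge in $C^3$), which is part of the machinery already set up earlier in the paper for defining $n(g,\Gamma)$.

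Once $\vGamma$ is shown to be open and compact in $\Zcal(\vg)$, the second bullet of Theorem~\ref{thm:main-intro} gives $n(g_0,\Gamma_0)=n(g_1,\Gamma_1)$ where $\Gamma_j=\vGamma\cap(\{j\}\times\Xcal)$. By construction $\Gamma_0=\Gamma_{g_0,L_0}$ and $\Gamma_1=\Gamma_{g_1,L_1}$ (or $\Gamma_{g_1,L_0}$ in the first step), hence $\pi_M(g_0,L_0)=\pi_M(g_1,L_1)$. Combining the $L$-step and the $g$-step, $\pi_M$ is constant on a product neighborhood of $(g_0,L_0)$ in $\Gcal\times\R$ intersected with $\Rcal_M$, which is exactly local constancy. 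The main obstacle is the compactness of $\vGamma$: one must rule out geodesics whose length drifts into the forbidden window $[L_0-\epsilon,L_0+\epsilon]$ as $t$ and $g$ vary, i.e. show the gap in the length spectrum is stable under small $C^3$ perturbations; everything else is a routine translation between the statement of Theorem~\ref{thm:main-intro} and the definitions of $\Rcal_M$ and $\pi_M$.
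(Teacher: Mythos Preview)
Your proposal is correct and follows the route the paper intends: the paper does not give a separate proof of Corollary~\ref{cor:main-intro} but simply asserts that it follows from Theorem~\ref{thm:main-intro}, and what you have written is exactly the natural unpacking of that implication (connect $g_0$ to a nearby $g_1$ by a short path $\vg$, take $\vGamma=\{(t,\gamma)\in\Zcal(\vg):\length_{\vg(t)}(\gamma)<L_0\}$, and apply the invariance clause). Your identification of the one nontrivial point---compactness of $\vGamma$ via stability of the gap $[L_0-\epsilon,L_0+\epsilon]$ in the length spectrum under small $C^3$ perturbations, established by an Arzel\`a--Ascoli/limit argument---is on target and is the same compactness mechanism used in Section~\ref{sec:generalizations} to make $n(g,\Gamma)$ well-defined.
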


For every $g\in\Gcal$, the function $\pi_g=\pi_M(g,\cdot):\R\setminus\Lcal_g\ra \Z$ is well-defined. Corollary~\ref{cor:main-intro} and the study of the geodesic count function over negatively curved Riemannian manifolds suggest the study of the asymptotic behavior of $\pi_g$ and comparing it with the topological entropy of $g$. \\

Another interesting case of our construction is the situation where  $U\subset \Gcal$ is open and the restriction  $p_U:p^{-1}(U)\ra U$ of $p$ is proper. In this case,  $\Ycal(g)\subset \Xcal$ is open an compact for every $g\in U$ and the weight $n(g,\Ycal(g))$ is well-defined. If $U$ is also connected, this number is constant and is precisely the {\emph{degree}} of $p_U$, as defined and investigated in \cite{White-1}. One example is the case where $U$ is a neighborhood of the standard (round) metric on the sphere $M=S^n$.\\

 Given $d\in\Z^+$, let $\varphi_d:S^1\ra S^1$ be the standard degree $d$ map, $\psi_d:S^1\ra S^1$ be the rotation by $2\pi/d$, and $\lambda$ be a primitive $d$-th root of unity. Given $(g,\gamma)\in\Ycal$, a non-zero normal Jacobi field $\xi$ along $\gamma\circ\varphi_d:S^1\ra M$ is called a $\lambda$-Jacobi field along $\gamma$, if it admits a {\emph{twin}} normal Jacobi field $\xi'$ such that $\psi_d^*(\xi+i\xi')=\lambda\cdot (\xi+i\xi')$ (where $i=\sqrt{-1}$ throughout the paper). The existence of a normal Jacobi field along a multiple cover of $(g,\gamma)\in\Ycal$ implies the existence of a $\lambda$-Jacobi field along $(g,\gamma)$ for some root $\lambda$ of unity. The {\emph{bad locus}}, where closed geodesics are accompanied with $\lambda$-Jacobi fields (for a fixed root $\lambda$ of unity) is described by the following proposition:

\begin{prop}\label{prop:main-intro}
 The triples $(g,\gamma,\xi)$ where $(g,\gamma)\in\Ycal$ and $\xi$ is a  $\lambda$-Jacobi field  along $\gamma$ form a $C^{1}$ Banach manifold $\Wcal^\lambda$, and is equipped with an action of $S^1\times\R^*$ (which rotates the geodesics and scales the Jacobi fields). The fibering map  
 \[p^\lambda:\widetilde{\Wcal}^\lambda=\Wcal^\lambda/(S^1\times\R^*)\ra\Gcal\] 
  is Fredholm and has index $-1$. 
\end{prop}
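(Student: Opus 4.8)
The plan is to realize $\Wcal^\lambda$ as the zero set of a Fredholm section of a Banach bundle over the $C^1$ Banach manifold $\Ycal$, exactly in the spirit of Abraham's construction of $\Ycal$ itself. Fix $(g,\gamma)\in\Ycal$. Pull back by $\varphi_d$ and consider the space $N_\gamma$ of $W^{2,2}$ normal vector fields along $\gamma\circ\varphi_d$, together with the $\psi_d$-equivariance condition that, complexified, the field lies in the $\lambda$-eigenspace of $\psi_d^*$ (equivalently, a pair $(\xi,\xi')$ with $\psi_d^*(\xi+i\xi')=\lambda(\xi+i\xi')$); call this closed subspace $N_\gamma^\lambda$. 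The linearized geodesic (Jacobi) operator $J_{g,\gamma}=\nabla_t^2+R(\cdot,\dot\gamma)\dot\gamma$ is a self-adjoint elliptic operator $N_\gamma^\lambda\to L^2$-normal-fields-in-the-$\lambda$-eigenspace; because $\lambda$ is a primitive $d$-th root of unity, no constant multiple of $\dot\gamma$ survives the equivariance constraint, so on $N_\gamma^\lambda$ the operator is genuinely Fredholm of index $0$ with no forced kernel. First I would assemble these fibrewise: over $\Ycal$ one gets a Banach bundle $\Ecal^\lambda$ with fibre $N_{\gamma}^\lambda$ (the normal bundle construction is continuous in $(g,\gamma)$ because $\Ycal$ is only $C^1$, but this is the same mild regularity Abraham already deals with), and the triples $(g,\gamma,\xi)$ with $\xi$ a $\lambda$-Jacobi field form the subset of $\Ecal^\lambda$ on which the (fibrewise linear) section $(g,\gamma,\xi)\mapsto J_{g,\gamma}\xi$ vanishes.

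Next I would prove the universal moduli space is a manifold by a Sard--Smale argument: show that the map $\Phi^\lambda:\Ecal^\lambda\to (\text{dual bundle})$, $(g,\gamma,\xi)\mapsto J_{g,\gamma}\xi$, is transverse to the zero section at every zero with $\xi\neq 0$. The key computation is that varying $g$ alone already surjects onto the obstruction space: given a nonzero $\lambda$-Jacobi field $\xi$, one must find a variation $\dot g$ of the metric (a $C^3$ symmetric $2$-tensor) whose derivative of the Jacobi operator, paired against any prescribed normal field $\zeta$ in the cokernel, is nonzero. This is a unique-continuation statement — a $\lambda$-Jacobi field and a cokernel element cannot both vanish on an open set — combined with the freedom to perturb the curvature term $R(\cdot,\dot\gamma)\dot\gamma$ pointwise by choosing $\dot g$ supported near a point where $\xi$ and $\zeta$ are both nonzero. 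I would carry this out pointwise and then bump-function it, which is the standard move; the only subtlety is keeping the perturbation inside $C^3$ metrics and respecting the $\psi_d$-equivariance, which one arranges by symmetrizing $\dot g$ over the deck group of $\varphi_d$. Granting transversality, $\Wcal^\lambda=(\Phi^\lambda)^{-1}(0)\setminus\{\xi=0\}$ is a $C^1$ Banach manifold.

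Then I would account for the symmetries and index. The $S^1$-action rotating $\gamma$ and the $\R^*$-action scaling $\xi$ are free on $\Wcal^\lambda$ (the $\R^*$-action is free because $\xi\neq 0$; the $S^1$-action is free away from multiply-covered $\gamma$, and on $\Ycal$ geodesics are not multiply covered), and smooth, so the quotient $\widetilde\Wcal^\lambda$ is again a $C^1$ Banach manifold. For the index of $p^\lambda:\widetilde\Wcal^\lambda\to\Gcal$: the projection from the universal space $\Wcal^\lambda\to\Gcal$ has Fredholm index computed by composing the inclusion $\Wcal^\lambda\hookrightarrow\Ecal^\lambda$ with the bundle projection; over a fixed bumpy-enough metric the vertical part contributes $\mathrm{index}(J_{g,\gamma})=0$ on $N_\gamma^\lambda$, the $\gamma$-direction contributes the index of $p:\Ycal\to\Gcal$ which is $0$ (it is Fredholm of index $0$ by Abraham), and then imposing $\xi\neq 0$ and quotienting by the two-dimensional group $S^1\times\R^*$ drops the index by $1$ net: the $\R^*$-scaling is a genuine extra parameter one has divided out ($-1$) while the $\xi$-direction, being the kernel direction that the transversality argument filled, is balanced against the cokernel — more precisely $\dim\ker - \dim\mathrm{coker}$ for $J$ restricted to the line through $\xi$ is $1-0$, and modding by $\R^*$ removes exactly that $1$, while modding by $S^1$ is cancelled by the $S^1$-orbit direction already present in $\Ycal$. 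Carefully bookkept, $\mathrm{index}(p^\lambda)=0+0+1-2=-1$.

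The main obstacle I expect is the transversality step, specifically proving that metric variations alone span the cokernel. Unlike the classical bumpy-metric theorem — where one perturbs to kill Jacobi fields and a dimension count suffices — here one is perturbing a \emph{linear} equation in $\xi$ at a \emph{nonzero} solution, so the argument rests on unique continuation for the Jacobi operator (to guarantee $\xi$ and any cokernel vector $\zeta$ have a common point of non-vanishing) and on the fact that the first variation of $R(\cdot,\dot\gamma)\dot\gamma$ under $\dot g$ can be made to hit an arbitrary prescribed symmetric endomorphism of the normal bundle at that point. Handling the equivariant constraint under $\varphi_d$ — averaging $\dot g$ over the $\Z/d$ deck action without destroying the pointwise surjectivity — and the low ($C^1$) regularity of $\Ycal$, which forces one to phrase everything in charts rather than assuming smooth dependence, are the remaining technical points; both are manageable but deserve care.
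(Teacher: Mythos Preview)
Your overall architecture matches the paper's: set up $\Wcal^\lambda$ as the zero locus of the fibrewise-linear section $\Psi^\lambda:\Fcal^\lambda\to\Ecal^\lambda$ over $\Ycal$, prove surjectivity of $D\Psi^\lambda$ by metric perturbations, and then quotient by $S^1\times\R^*$ to get index $-1$. The index bookkeeping and the Sard--Smale framing are fine.

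There is, however, a genuine confusion in your transversality step. You write that the $\psi_d$-equivariance constraint is handled by ``symmetrizing $\dot g$ over the deck group of $\varphi_d$.'' But the metric perturbation $h$ lives on $M$, not on the domain $S^1$; there is no deck-group action on $M$ to symmetrize over. The induced perturbation of the Jacobi operator along $\gamma\circ\varphi_d$ is \emph{automatically} $\psi_d$-equivariant, precisely because $\gamma\circ\varphi_d\circ\psi_d=\gamma\circ\varphi_d$. The real obstacle is the opposite of what you describe: since $\gamma$ is somewhere injective, a bump perturbation $h$ supported near a single point $z_0$ of the image is felt at all $d$ preimages $\theta_0,\psi_d(\theta_0),\ldots,\psi_d^{d-1}(\theta_0)$ on the domain of $\gamma\circ\varphi_d$ simultaneously. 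So when you pair $D\Psi^\lambda(h,0,0)$ against a cokernel element $\zeta$, you get a sum of $d$ coupled terms, not a single local term you can prescribe freely. Pointwise surjectivity of the curvature variation alone does not obviously survive this summation.

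The paper resolves this by exploiting the $\lambda$-eigenspace algebra on \emph{both} $\xi$ and $\zeta$. Writing $\xi_j=(\psi_d^j)^*\xi$ and using $(\psi_d^j)^*(\zeta+i\zeta')=\lambda^j(\zeta+i\zeta')$, the pairing over the $d$ preimages collapses; assuming it vanishes for all bump functions $f$ forces $\sum_{j=0}^{d-1}\lambda^{-j}(\psi_d^j)^*(\xi+i\xi')=0$, which by the eigenspace condition on $\xi$ equals $d(\xi+i\xi')$, contradicting $\xi\neq 0$. This algebraic cancellation is the heart of the argument, and your unique-continuation-plus-bump sketch does not yet contain it.
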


Proposition~\ref{prop:main-intro} implies that the image of $\widetilde{\Wcal}^\lambda$ in $\Gcal$ has codimension $2$ for $\lambda\neq \pm1$, and can be avoided when  two bumpy metrics $g_0$ and $g_1$ are connected with a generic path $\vg$. However, $\pm1$-Jacobi fields are inevitable along a discrete subset $\widetilde{\Zcal}_J(\vg)\subset\widetilde{\Zcal}(\vg)$. A Kuranishi model for a neighborhood of $\widetilde{\Zcal}_J(\vg)$ in $\widetilde{\Zcal}(\vg)$ is developed,  and is key to the proof of Theorem~\ref{thm:main-intro}.  The model identifies the aforementioned neighborhood with the zero set of a function $F:\R^2\ra \R$ and shows that $\widetilde{\Zcal}(\vg)$ is not a manifold near $(t,\gamma\circ\varphi_2)$ if there is a $-1$-Jacobi field $\xi$  along $(t,\gamma)\in\Ycal(\vg)$. However, the weights are defined so that the invariance claim in Theorem~\ref{thm:main-intro} is achieved. \\

Let $Z$ denote the zero section in the tangent bundle $TM$ of the $n$-manifold $M$ and $\sim$ denote the equivalence relation which identifies any vector in $TM\setminus Z$ with its non-zero multiples. The quotient $S_M=(TM\setminus Z)/\sim$ is then a smooth closed  manifold of dimension $2n-1$, which may be identified as the unit tangent  bundle of $M$ in the presence of a Riemannian metric. Denote the space of smooth vector fields on $S_M$ by $\Vcal(S_M)$. Every Riemannian metric $g\in\Gcal(M)$ corresponds to a vector field $\zeta_g\in\Vcal(S_M)$ and closed geodesics on the Riemannian manifold $(M,g)$ are  in correspondence with closed orbits of the geodesic flow (on $S_M$) associated with the Riemannian metric $g$ (and the vector field $\zeta_g$).  Counting closed geodesics on $M$ is thus closely related to counting periodic orbits of vector fields on $S_M$, which is discussed in \cite{Ef-orbits}. We obtain a similar count function, which is related to some of the standard fixed point theorems for dynamical systems. The possibility of having zeros for general vector fields makes the discussion of \cite{Ef-orbits} more complicated.

\section{Preliminaries on the moduli space of closed geodesics}\label{sec:moduli}
We continue to use the notation set in the introduction. For $\gamma\in\Xcal$, the equivalence class of $\gamma$ under reparametrization of the domain $S^1$ is denoted by $[\gamma]\in\Xcal/\mathrm{Diff}^+(S^1)$. The pull-back of the tangent bundle $TM$ of $M$  under  $\gamma$ is a vector bundle  $W_\gamma=\gamma^*TM$ over $S^1$. The image of the tangent bundle  of $S^1$ under $d\gamma$ is a line sub-bundle $T_\gamma$ of $W_\gamma$ and  $N_\gamma=W_\gamma/T_\gamma$ is the normal bundle of $\gamma$ in  $M$.  Given $k\in\Z^{\geq 0}$, let $A_{k}(W_\gamma)$ and $A_{k}(N_\gamma)$ denote the vector spaces of  $W_\gamma$-valued and $N_\gamma$-valued functions on $S^1$, respectively, which are  of Sobolev class $W^{k,2}$.  $A_{2}(W_\gamma)$ is the tangent fiber $T_\gamma\Xcal$.  For $\gamma\in\Xcal$, let $\dot\gamma=d\gamma/d\theta$, where $\theta$ parametrizes the circle $S^1=\R/\Z$.  Given $g\in\Gcal$, a map $\gamma\in\Xcal$  is a {\emph{closed $g$-geodesic}} if  $\nabla^g_{\dot{\gamma}}\dot{\gamma}=0$, where  $\nabla^g$ denotes the Levi-Civita connection of $g$. If $\gamma\circ\rho\in[\gamma]$ is also a $g$-geodesic for some $\rho\in\Diff^+(S^1)$, it follows that $\rho$ is a rotation.  The equivalence class of $(g,\gamma)\in\Zcal$ under the rotation action of $S^1$ may thus be denoted $(g,[\gamma])$. The closed $g$-geodesics which share the same image in $M$ form a union of two copies of $S^1$ in $\Zcal$. If $R^g$ is the Riemann curvature of $g$, given by 
\[R^g(X,Y)Z=\nabla^g_X\nabla^g_Y Z-\nabla^g_Y\nabla^g_X Z-\nabla^g_{[X,Y]}Z,\] 
we define 
\begin{displaymath}
\Psi^w_{g,\gamma}:A_{2}(W_\gamma)\ra A_{0}(W_\gamma),\quad\quad\Psi^w_{g,\gamma}(\zeta)=\nabla^g_{\dot{\gamma}}\nabla^g_{\dot{\gamma}}\zeta+R^g(\zeta,\dot{\gamma})\dot{\gamma}.
\end{displaymath}
The operator $\Psi^w_{g,\gamma}$ induces the maps 
\[\Psi^t_{g,\gamma}:A_{2}(T_\gamma)\ra A_{0}(T_\gamma)\quad\text{and}\quad\Psi_{g,\gamma}:A_{2}(N_\gamma)\ra A_{0}(N_\gamma).\] 
By a {\emph{Jacobi field}} $\zeta$ along $\gamma$,  we mean a vector field in the kernel of  $\Psi^w_{g,\gamma}$. The Jacobi fields describe the difference between $\gamma$ and an infinitesimally close geodesic. Constant multiples of $\dot{\gamma}$ are {\emph{trivial}} examples of Jacobi fields, and correspond to the rotation action of $S^1$ on $\Zcal$. If there are no non-trivial Jacobi fields along $\gamma$, then $\gamma$ is isolated in $\widetilde{\Zcal}(g)=\Zcal(g)/S^1$. This happens if and only if the kernel of $\Psi_{g,\gamma}$ is trivial. If this latter condition is satisfied for every closed $g$-geodesics, $g\in\Gcal$ is called a {\emph{bumpy metric}}.  Theorem~\ref{thm:transversality} follows from the argument of \cite{Abraham} (c.f. \cite{White-1} and \cite{White-2}).

 \begin{thm}\label{thm:transversality}
$\Ycal$ is a separable $C^{1}$ Banach manifold. For $g$ in the Bair subset $\Gcal^*\subset \Gcal$  of regular values of the Fredholm map $p:\Ycal\ra\Gcal$,  $\widetilde{\Ycal}(g)=\Ycal(g)/S^1$ is  a $0$-manifold.  Moreover, for $g_0,g_1\in\Gcal^{*}$,  $\Ycal_{g_0,g_1}$ is a separable $C^{1}$ Banach manifold. For $\vg$ in the Bair subset $\Pcal^{*}_{g_0,g_1}\subset \Pcal_{g_0,g_1}$ of regular values of the Fredholm fibering map from $\Ycal_{g_0,g_1}$ to $\Pcal_{g_0,g_1}$,  $\widetilde{\Ycal}(\vg)=\Ycal(\vg)/S^1$ is a $C^1$ $1$-manifold.
\end{thm}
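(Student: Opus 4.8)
The plan is to follow Abraham's transversality scheme, adapted to the parametrized ($C^3$) setting. First I would set up the universal moduli problem: consider the bundle $\Ecal\to\Ycal$ whose fiber over $(g,\gamma)$ is $A_0(N_\gamma)$, and the section $s(g,\gamma)=\Psi_{g,\gamma}(\text{normal part of }\nabla^g_{\dot\gamma}\dot\gamma)$ — but more precisely, since $\Ycal$ is already cut out, I would work with the map assigning to $(g,\gamma)\in\Gcal\times\Xcal$ the normal component of $\nabla^g_{\dot\gamma}\dot\gamma\in A_0(N_\gamma)$ and show $0$ is a regular value. The key computation is that the vertical differential, at a fixed geodesic $\gamma$ for metric $g$, is surjective \emph{after} allowing the metric to vary: the linearization in the $\gamma$-direction is essentially $\Psi_{g,\gamma}$ (a self-adjoint Fredholm operator of index $0$ on the normal bundle, by the standard Jacobi operator analysis), and the linearization in the $g$-direction must be shown to hit the cokernel of $\Psi_{g,\gamma}$. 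This last point is Abraham's lemma: a normal Jacobi field $\xi$ lying in the cokernel can be killed by an explicit first-order perturbation of the metric supported near a point where $\gamma$ is embedded (using that $\gamma\in\Ycal$ is not multiply covered, so such an embedded arc exists, and a bump-function perturbation of $g$ there changes $R^g$ and hence $\Psi_{g,\gamma}$ in the required direction while preserving the geodesic equation to first order). Given surjectivity, the implicit function theorem for $C^1$ Banach manifolds gives that $\Ycal$ is a separable $C^1$ Banach manifold; separability follows from separability of $\Gcal$ and $\Xcal$ in the relevant topologies.

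Next I would invoke the Sard–Smale theorem. The projection $p:\Ycal\to\Gcal$ is Fredholm: its linearization at $(g,\gamma)$ has kernel the space of normal Jacobi fields modulo the trivial ones (once we quotient by $S^1$) and cokernel again controlled by $\Psi_{g,\gamma}$; since $\Psi_{g,\gamma}$ has index $0$ and the rotation accounts for the $1$-dimensional discrepancy, $p$ has Fredholm index $0$ on $\Ycal$ and the induced map $\widetilde p:\widetilde\Ycal\to\Gcal$ has index $0$ as well — wait, I should be careful: $\widetilde\Ycal(g)$ is asserted to be a $0$-manifold, consistent with $\widetilde p$ having index $0$. So for $g$ in the Baire (residual) set $\Gcal^*$ of regular values, $\widetilde p^{-1}(g)=\widetilde\Ycal(g)$ is a $0$-dimensional $C^1$ manifold. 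The $S^1$-action on $\Ycal(g)$ is free (a geodesic fixed by a nontrivial rotation would be constant, excluded), so the quotient is a genuine $0$-manifold. One technical caveat: Sard–Smale requires the map to be $C^k$ with $k$ large enough relative to the Fredholm index; since the index is $0$ this needs $C^1$, which matches the regularity Abraham's argument delivers — this is exactly why $C^3$ metrics are used rather than $C^2$.

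For the parametrized statement, I would run the identical argument one dimension up. Fix $g_0,g_1\in\Gcal^*$ and consider $\Ycal_{g_0,g_1}\subset\Pcal_{g_0,g_1}\times[0,1]\times\Xcal$. The same universal-transversality computation — now the metric perturbation ranges over paths of metrics rel endpoints, and one must check that such perturbations (supported in $t\in(0,1)$ and near an embedded arc of $\gamma$) still surject onto the cokernel of $\Psi_{\vg(t),\gamma}$ — shows $\Ycal_{g_0,g_1}$ is a separable $C^1$ Banach manifold, provided we already know the boundary fibers over $g_0,g_1$ are cut out transversally, which holds since $g_0,g_1\in\Gcal^*$. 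The fibering map to $\Pcal_{g_0,g_1}$ is Fredholm of index $1$ (one more than before, from the $[0,1]$-parameter), so Sard–Smale yields a residual set $\Pcal^*_{g_0,g_1}$ of regular values $\vg$ with $\widetilde\Ycal(\vg)=\Ycal(\vg)/S^1$ a $C^1$ $1$-manifold (with boundary $\widetilde\Ycal(g_0)\sqcup\widetilde\Ycal(g_1)$, by the transversality at the endpoints). The main obstacle, as in Abraham's original work, is the surjectivity of the metric-variation term onto the finite-dimensional cokernel: one must produce, for an arbitrary normal Jacobi field, an honest first-order metric perturbation that moves $\Psi_{g,\gamma}$ the right way \emph{without} destroying the constraint that $\gamma$ remain a geodesic — this is where the non-multiply-covered hypothesis and the local-in-space nature of curvature perturbations are essential, and where the bulk of the analytic work lies; everything else is a routine application of the Sard–Smale machinery.
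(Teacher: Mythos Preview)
Your outline is correct and is precisely the argument the paper invokes: the paper does not supply its own proof of this theorem but simply refers to Abraham's argument (with White's refinements), which is exactly the universal-transversality-plus-Sard--Smale scheme you describe. One small correction: the $S^1$-action on $\Ycal(g)$ is free not because a rotation-fixed geodesic would be constant, but because such a geodesic would be multiply covered and hence excluded from $\Ycal$ by definition.
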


There is a bundle $\Ecal\ra \Gcal\times\Xcal$ with fiber $\Ecal_{g,\gamma}=A_0(W_\gamma)$ at $(g,\gamma)\in\Gcal\times\Xcal$, and $\Zcal$ is the zero locus of the section $\nabla$ of $\Ecal$ defined by  $\nabla(g,\gamma)=\nabla^g_{\dot\gamma}\dot\gamma$. At a zero $(g,\gamma)$ of $\nabla$, 
the differential of $\nabla$ may be projected over the fiber to give a linear map
\[d\nabla_{g,\gamma}:T_g\Gcal\oplus T_\gamma \Xcal=\Hcal\oplus A_2(W_\gamma)\ra \Ecal_{g,\gamma}=A_0(W_\gamma).\]
Here $\Hcal=T_g\Gcal$ is the space of symmetric $2$-tensors of class $C^{2}$ on $M$. $d\nabla$ induces a map 
\[\wPsi_{g,\gamma}:\Hcal\oplus A_2(N_\gamma)\ra A_0(N_\gamma),\quad\quad\wPsi(h,\xi)=\Psi^m_{g,\gamma}(h)+\Psi_{g,\gamma}(\xi),\]
where $\Psi^m_{g,\gamma}(h)$ is the image of $d\nabla_{g,\gamma}(h)$ in $A_0(N_\gamma)$. If $\wPsi_{g,\gamma}$ is surjective, then $\Zcal$ has the structure of a $C^{1}$ Banach manifold near $(g,\gamma)\in\Zcal$. This is exactly what happens when $(g,\gamma)$ is in $\Ycal$. For every  $(g,\gamma)\in\Zcal$, we have $\gamma=\gamma'\circ\varphi_d$, where $(g,\gamma')\in\Ycal$, $d\in\Z^+$, and $\varphi_d:S^1\ra S^1$  is the standard degree-$d$ map. $\Zcal$ may fail to be a manifold near $(g,\gamma)$ if $d>1$, as will be discussed in the paper.

\begin{defn}\label{defn:rigidity}
$(g,\gamma)\in\Ycal$ is called $d${\emph{-rigid}} if $\Psi_{g,\gamma\circ\varphi_d}$ is injective, and is called {\emph{super-rigid}} if it is $d$-rigid for every $d\in\Z^+$. A metric $g$ is called {\emph{bumpy}} if every geodesic in $\Ycal(g)$ is super-rigid.
\end{defn}

The subspace $\Gcal^{\bullet }\subset\Gcal$ of bumpy metrics on $M$ is Bair \cite{Abraham}. We reprove this in a slightly stronger form, which is needed in this paper. Given $(g,\gamma)\in\Ycal$, let 
\[\Fcal^d_{g,\gamma}:=A_{2}(N_{\gamma\circ\varphi_d})=T_{\gamma\circ\varphi_d}\Xcal.\] 
Putting these vector spaces together, we obtain a bundle $\Fcal^d\ra \Ycal$. The  operators $\Psi_{g,\gamma\circ\varphi_d}$ give a bundle map $\Psi^d:\Fcal^d\ra \Ecal^d$, where the fibers of the bundle $\Ecal^d\ra \Ycal$ are $\Ecal^d_{g,\gamma}:=A_{0}(N_{\gamma\circ\varphi_d})$.  The rotation map $\psi_d:S^1\ra S^1$ gives an automorphism 
\[\psi_d^*: A_{*}(N_{\gamma\circ\varphi_d})\ra A_{*}(N_{\gamma\circ\varphi_d}),\] 
with $(\psi_d^*)^d=Id $, i.e.  an action of $\Z/d$ on the fibers of  $\Ecal^d$ and $\Fcal^d$. Given a primitive $d$-th root of unity $\lambda$, let $A^\lambda_*(N_\gamma)\subset A_*(N_{\gamma\circ\varphi_d})$ be generated by the sections $\xi$ which admit a {\emph{twin}} $\xi'\in A_*(N_{\gamma\circ\varphi_d}) $ such that 
 \[\psi_d^*(\xi+i\xi')=\lambda\cdot (\xi+i\xi')\in  A_*(N_{\gamma\circ\varphi_d})\otimes_\R\C.\]
It follows that the twin $\xi'$ is also in $A^\lambda_*(N_\gamma)$ and  $A^\lambda_*(N_\gamma) = A^{1/\lambda}_*(N_\gamma)$. For $\lambda=\pm 1$, $A^\lambda_*(N_\gamma) $ consists of the sections $\xi$ so that $\psi_d^*(\xi)=\lambda\cdot\xi$. The vector spaces 
\[\Fcal^\lambda_{g,\gamma}= A^\lambda_2(N_\gamma)\quad\text{and}\quad\Ecal^\lambda_{g,\gamma}= A^\lambda_0(N_\gamma)\]  form the bundles $\Fcal^\lambda\ra \Ycal$ and   $\Ecal^\lambda\ra\Ycal$. We also  obtain the decompositions
 \[\Fcal^d\simeq\bigoplus_{\lambda:\ \lambda^d=1\ \text{and}\ \mathrm{Im}(\lambda)\geq 0}\Fcal^\lambda\quad\quad\text{and}\quad\quad \Ecal^d\simeq\bigoplus_{\lambda:\ \lambda^d=1\ \text{and}\ \mathrm{Im}(\lambda)\geq 0}\Ecal^\lambda.\]
$\Psi^d_{g,\gamma\circ\varphi_d}$ respects these decompositions, and we obtain the homomorphisms $\Psi^\lambda:\Fcal^\lambda\ra \Ecal^\lambda$. Note that the kernel of $\Psi^\lambda$ corresponds to $\lambda$-Jacobi fields  along $(g,\gamma)$.

\begin{thm}\label{thm:super-rigidity}
Given a root $\lambda$ of unity, the moduli space of $\lambda$-Jacobi fields, defined by 
\begin{align*}
\Wcal^\lambda:=\Big\{(g,\gamma,\xi)\ \big|\ (g,\gamma)\in\Ycal\ \ \text{and}\ \ 0\neq \xi\in \Ker(\Psi^\lambda_{g,\gamma})\subset A^\lambda_{2}(N_{\gamma})\Big\}\subset\Fcal^\lambda,
\end{align*}
is a separable $C^{1}$ Banach manifold. The projection map 
\[p^\lambda:\widetilde{\Wcal}^\lambda=\Wcal^\lambda/(S^1\times\R^*)\ra \Gcal\] 
is Fredholm of index $-1$, and for $g$ in the Bair subset $\Gcal^{\lambda}\subset \Gcal$ of the regular values of  $p^\lambda$,
 the pre-image $\widetilde{\Wcal}^{\lambda}(g)$ of $g$ under $p^\lambda$  is empty. 
\end{thm}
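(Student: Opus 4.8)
The plan is to run the standard Sard–Smale transversality machinery, exactly as in Abraham's argument for Theorem~\ref{thm:transversality}, but now applied to the section whose zero locus cuts out $\Wcal^\lambda$ inside the bundle $\Fcal^\lambda\to\Ycal$. First I would set up the ambient structure: over $\Ycal$ (which is already a separable $C^1$ Banach manifold by Theorem~\ref{thm:transversality}) we have the Banach bundles $\Fcal^\lambda$ and $\Ecal^\lambda$ together with the bundle map $\Psi^\lambda$, and the total space $\Fcal^\lambda$ is a separable $C^1$ Banach manifold fibered over $\Ycal$. The moduli space $\Wcal^\lambda$ is the zero set of the section $s^\lambda$ of the pulled-back bundle $\mathrm{pr}^*\Ecal^\lambda\to\Fcal^\lambda$ (minus the zero section in the fiber direction) given by $s^\lambda(g,\gamma,\xi)=\Psi^\lambda_{g,\gamma}(\xi)$. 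The first key step is the linearization computation: at a point $(g,\gamma,\xi)$ with $\Psi^\lambda_{g,\gamma}(\xi)=0$ and $\xi\neq0$, I would compute the vertical differential of $s^\lambda$ in the directions $(h,\eta,\zeta)\in\Hcal\oplus A_2^\lambda(N_\gamma)\oplus A_2^\lambda(N_\gamma)$ and obtain an expression of the shape $\Psi^\lambda_{g,\gamma}(\zeta)+\left(D_{(g,\gamma)}\Psi^\lambda\right)(h,\eta)[\xi]$, where the second term involves the derivative of the curvature operator in the metric direction $h$ evaluated against the fixed Jacobi field $\xi$. The surjectivity of this linearization is the heart of the matter.

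The surjectivity argument is the main obstacle and is where the metric variations must be exploited. Since $\Psi^\lambda_{g,\gamma}$ is a self-adjoint (up to the metric-induced $L^2$ pairing) Fredholm operator of index $0$ on the $\lambda$-isotypic normal sections, its image has finite codimension equal to $\dim\Ker\Psi^\lambda_{g,\gamma}$, so it suffices to show that the curvature-variation term $h\mapsto \Psi^{m,\lambda}_{g,\gamma}(h)[\xi]$ spans a complement; equivalently, by $L^2$-duality, that no nonzero $\eta'\in\Ker\Psi^\lambda_{g,\gamma}$ is $L^2$-orthogonal to $\left(\partial_h\Psi^\lambda\right)[\xi]$ for all symmetric $2$-tensors $h$. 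This is a unique-continuation / pointwise-perturbation argument: one writes the pairing $\int_{S^1}\langle (\partial_h R^g)(\xi,\dot\gamma)\dot\gamma,\eta'\rangle\,d\theta$ explicitly, notes it is linear in $h$ localized along the image of $\gamma$, and uses the freedom to choose $h$ supported near a single point where $\gamma$ is an embedding (using that $\gamma=\gamma'\circ\varphi_d$ with $\gamma'$ not multiply covered, so the image is an immersed circle and generic points are embedded) to force the integrand's relevant tensor contraction $\xi\otimes\eta'$ to vanish at that point; a unique continuation theorem for the second-order ODE $\Psi^\lambda_{g,\gamma}\xi=0$ then forces $\xi\equiv0$ or $\eta'\equiv0$, a contradiction. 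Care is needed here to check the $\lambda$-equivariance is preserved — one must symmetrize the test tensor $h$ over the $\Z/d$-action so that $\partial_h\Psi$ lands in the $\lambda$-isotypic component — and to handle the case $\lambda=\pm1$ versus $\lambda\neq\pm1$ uniformly, the former giving real one-dimensional kernels and the latter complex.

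Once surjectivity of the vertical differential is established, $\Wcal^\lambda=s^{\lambda,-1}(0)$ is a $C^1$ Banach submanifold of $\Fcal^\lambda$ by the implicit function theorem, separable since $\Fcal^\lambda$ is. The $S^1\times\R^*$ action (rotation of $\gamma$ and scaling of $\xi$) is free — freeness in the $\R^*$ factor because $\xi\neq0$, and in the $S^1$ factor because a rotation fixing $(g,\gamma)$ would be trivial on a non-multiply-covered underlying geodesic — and smooth, so the quotient $\widetilde{\Wcal}^\lambda$ is again a separable $C^1$ Banach manifold. For the index of $p^\lambda$: the fiber projection $\Fcal^\lambda\to\Ycal$ restricted to $\Wcal^\lambda$ has, over a point of $\Ycal$, fiber equal to the punctured kernel of $\Psi^\lambda_{g,\gamma}$; combining the index of the Fredholm map $p:\Ycal\to\Gcal$ from Theorem~\ref{thm:transversality} (which is $0$ on the $S^1$-quotient, so $p$ itself has index $1$ before quotienting, or rather one tracks $\widetilde\Ycal\to\Gcal$ having index $0$) with the index of $\Psi^\lambda$ (which is $0$) and then removing the two dimensions killed by the $S^1\times\R^*$ quotient and adding back the one dimension of the $\R^*$-orbit in the $\xi$-direction, a direct bookkeeping gives $\mathrm{ind}(p^\lambda)=0+0-1=-1$. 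Finally, since $p^\lambda$ is a $C^1$ Fredholm map of index $-1$ between separable Banach manifolds, the Sard–Smale theorem supplies a residual (Baire) set $\Gcal^\lambda\subset\Gcal$ of regular values; but a regular value of a Fredholm map of negative index has empty preimage, so $\widetilde{\Wcal}^\lambda(g)=\emptyset$ for all $g\in\Gcal^\lambda$, completing the proof.
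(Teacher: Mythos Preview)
Your overall architecture---set up $\Wcal^\lambda$ as the zero set of a section, prove surjectivity of the linearization via metric perturbations, quotient by $S^1\times\R^*$, compute the index as $-1$, apply Sard--Smale---is exactly the paper's. The index bookkeeping and the final step are fine.

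The gap is in the surjectivity argument, and it is the heart of the proof. Your plan is to choose $h$ supported near a single point of $M$ where $\gamma$ is embedded, deduce pointwise vanishing of $\xi\otimes\eta'$, and then invoke unique continuation. The problem is that the sections $\xi,\eta'$ live in $A^\lambda_*(N_\gamma)\subset A_*(N_{\gamma\circ\varphi_d})$, i.e.\ along the $d$-fold cover $\gamma\circ\varphi_d$. A metric perturbation $h$ localized near a point $p$ of the image of $\gamma$ in $M$ affects the operator along $\gamma\circ\varphi_d$ at \emph{all $d$} preimages $\theta_0,\psi_d(\theta_0),\ldots,\psi_d^{d-1}(\theta_0)$ simultaneously; you cannot isolate a single point of the domain $S^1$. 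So the pairing $\int\langle(\partial_h\Psi)[\xi],\eta'\rangle$ yields only a sum of $d$ terms, not a single tensor contraction, and your unique continuation step does not get off the ground as stated. Relatedly, your suggestion to ``symmetrize $h$ over the $\Z/d$-action'' is a confusion: $h$ lives on $M$, where $\Z/d$ does not act; the equivariance of $\Psi_{g,\gamma\circ\varphi_d}$ under $\psi_d^*$ is automatic for any $h$.

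What the paper does instead is to embrace this $d$-fold sum and exploit the $\lambda$-equivariance algebraically. With $h$ localized near $p$ and $\zeta\in\Ker(D^*)$ the putative cokernel element (with twin $\zeta'$), the vanishing of the pairing gives, at any $\theta$ with $\zeta(\theta)\neq 0$, the relation $\sum_{j=0}^{d-1}\lambda^{-j}(\psi_d^j)^*\xi=0$ (and the same for $\xi'$). But by the defining relation $\psi_d^*(\xi+i\xi')=\lambda(\xi+i\xi')$ this sum equals $\sum_j\lambda^{-j}\lambda^j(\xi+i\xi')=d(\xi+i\xi')$, forcing $\xi=0$ on the (dense) set where $\zeta\neq 0$, a contradiction. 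The contradiction comes from the identity $\sum_j\lambda^{-j}\lambda^j=d$, not from ODE unique continuation. You have correctly flagged the $\lambda$-equivariance as the crux; the missing idea is that it is used to \emph{decouple} the $d$ coupled contributions via this roots-of-unity identity.
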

\begin{proof}
We assume $\lambda\neq \pm 1$. The proof is similar for $\lambda=\pm1$ (c.f. \cite[Lemma 4.4]{Ef-rigidity}). The differential of $\Psi^\lambda:\Fcal^\lambda\ra \Ecal^\lambda$ at a zero $(g,\gamma,\xi)$ may be projected over the fiber of $\Ecal^\lambda_{g,\gamma}$  to give 
\[D\Psi^\lambda:T_{(g,\gamma)}\Ycal\oplus A^\lambda_{2}(N_{\gamma})\ra A^\lambda_{0}(N_{\gamma}).\]
The surjectivity of $D\Psi^\lambda$ implies the first claim. The restriction of $D\Psi^\lambda$ to  $A^\lambda_{2}(N_{\gamma})$ gives the map
\[D=D_{g,\gamma}^\lambda:A^\lambda_{2}(N_{\gamma})\ra A^\lambda_{0}(N_{\gamma}),\]
which is Fredholm of index $0$. If $D^*$ is the adjoint of $D$, every element in $\Coker(D\Psi^\lambda)$ is represented by some $\zeta\in \Ker(D^*)$ which is orthogonal to the image of $D\Psi^\lambda$. The sections $\xi$ and $\zeta$ admit twins $\xi'$ and $\zeta'$ respectively, so that 
\begin{equation}\label{eq:group-action}
\psi_d^*(\xi+i\xi)=\lambda\cdot(\xi+i\xi)\quad\quad\text{and}\quad\quad \psi_d^*(\zeta+i\zeta)=\lambda\cdot(\zeta+i\zeta).
\end{equation}  
Fix $\theta_0\in S^1$ and choose the local coordinates around $z_0=\gamma(\varphi_d(\theta_0))$ so that $\gamma$ is given by $(\theta,0,...,0)$ and $g(\gamma(\varphi_d(\theta)))$ is given by the identity matrix near $\theta_0$. Fix a function $f$ supported in a neighborhood of $z_0$ so that $f$ and $df$ vanish along $\gamma$. Let $h_f\in\Hcal=T_g\Gcal$ be given by $(h_{jk})_{j,k=1}^m$ in local coordinates, where $h_{jk}=0$, except for $h_{1,1}$ which is equal to $f$. Then 
\[(h_f,0)\in T_{(g,\gamma)}\Ycal\subset \Hcal\oplus A_2(W_\gamma)\]  
and  the $j$-th component of $D\Psi^\lambda(h_f,0,0)$ is $-\sum_k\xi^k\partial_k\partial_jf$, by direct computation. Let $\xi_j^k$, $\zeta^k$ and $(\zeta')^k$ denote the $k$-th coordinates of $(\psi_d^j)^*\xi$, $\zeta$ and $\zeta'$, respectively. $\sum_k\xi^k\partial_k\partial_jf$  is supported over $\cup_{k=0}^{d-1}\psi_d^k(J_\epsilon)$, where $J_\epsilon=(\theta-\epsilon,\theta+\epsilon)$, and 
\begin{align*}
0=\int_{S^1}\Big\langle D\Psi^\lambda(h_f,0,0),(\zeta+i\zeta')\Big\rangle_gd\theta &=-\int_{J_\epsilon} \Big(\sum_{j,k,m=0}^{d-1}\xi_j^k (\psi_d^j)^*(\zeta^m+i(\zeta')^m)\partial_k\partial_mf\Big) d\theta.
\end{align*} 
Moreover, by varying $f$  and using (\ref{eq:group-action}) we conclude that for every $\theta$ with $\zeta(\theta)\neq 0$
\begin{align*}
\sum_{j=0}^{d-1}\xi_j(\theta)\cdot\mathrm{Re}(\lambda^j)=\sum_{j=0}^{d-1}\xi_j(\theta)\cdot\mathrm{Im}(\lambda^j)=0\ \ \ \Rightarrow\ \ \ \sum_{j=0}^{d-1}\lambda^{-j}\cdot (\psi^j_d)^*(\xi)=0.
\end{align*}
Similarly, $\sum_{j=0}^{d-1}\lambda^{-j}\cdot (\psi^j_d) ^*(\xi')=0$ and we thus have 
\[0=\sum_{j=0}^{d-1}\lambda^{-j}\cdot (\psi^j_d) ^*(\xi+i\xi')=\sum_{j=0}^{d-1}\lambda^{-j}\cdot \lambda^j\cdot(\xi+\xi')=d\cdot(\xi+i\xi').\]
This contradiction implies that $\zeta=0$. Therefore,  $D\Psi^\lambda$ is surjective and the zero locus $W^\lambda$ of $\Psi^\lambda$ is transversely cut out as a separable $C^{1}$ Banach  manifold. The index of  
\[p^\lambda:\widetilde{\Wcal}^\lambda=\Wcal^\lambda/(S^1\times\R^*)\ra\Gcal\] 
is $-1$. Let $\Gcal^{\lambda}$ denote the set of regular values of $p^\lambda$, which is a Bair subset of $\Gcal$ by Sard-Smale theorem \cite{Smale}.  The regularity of $g\in \Gcal^{\lambda}$ implies that  $\widetilde{\Wcal}^{\lambda}(g)$  is  empty. 
\end{proof}

The Bair subset 
\[\Gcal^{\bullet }\:=\Gcal^*\cap\big(\bigcap_{\lambda}\Gcal^{\lambda}\big)\subset \Gcal\] 
(where the intersection is over all roots of unity) consists of the metrics $g$ such that all closed $g$-geodesics are super-rigid (i.e. the bumpy metrics). Fix $g_0,g_1\in\Gcal^{\bullet }$. The vector bundles $\Ecal^\lambda$ and $\Fcal^\lambda$ may  be pulled back from $\Ycal$ over $\Ycal_{g_0,g_1}$ via the evaluation map which sends $(t,\vg,\gamma)\in\Ycal_{g_0,g_1}$ to $(\vg(t),\gamma)\in\Ycal$, to give the bundles  $\Ecal^\lambda_{g_0,g_1}$ and $\Fcal^\lambda_{g_0,g_1}$, respectively. Moreover, the operators $\Psi^\lambda_{t,\vg,\gamma}:=\Psi^\lambda_{\vg(t),\gamma}$ give the bundle maps $\Psi^\lambda_{g_0,g_1}:\Fcal^\lambda_{g_0,g_1}\ra \Ecal^\lambda_{g_0,g_1}$. 

\begin{thm}\label{thm:super-rigidity-cobordism}
Fix $g_0,g_1\in\Gcal^{\bullet }$ and the root $\lambda$ of unity. Then 
\begin{align*}
\Wcal^\lambda_{g_0,g_1}:=\left\{(t,\vg,\gamma,\xi)\ \Big|\ (t,\vg,\gamma)\in\Ycal_{g_0,g_1}\ \ \text{and}\ \ 0\neq \xi\in \Ker(\Psi^\lambda_{t,\vg,\gamma})\right\}
\end{align*}
is a separable $C^{1}$ Banach manifold. For $\vg$ in  a Bair subset $\Pcal_{g_0,g_1}^{\lambda}\subset \Pcal_{g_0,g_1}$, 
\begin{displaymath}
\Wcal^\lambda(\vg)=\Wcal_{g_0,g_1}^{\lambda}(M,\vg)=\{(t,\gamma,\xi)\ |\ (t,\vg,\gamma,\xi)\in\Wcal^\lambda_{g_0,g_1}\}
\end{displaymath}
is empty, unless $\lambda=\pm1$. In the latter case,  $\widetilde{\Wcal}^\lambda(\vg)=\Wcal^\lambda(\vg)/(S^1\times\R^*)$ is a $0$-manifold.
\end{thm}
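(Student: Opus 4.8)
The plan is to run the same Sard--Smale machinery as in Theorem~\ref{thm:super-rigidity}, but now for the parametrized family sitting over $\Pcal_{g_0,g_1}$, and then split into the two cases $\lambda\neq\pm1$ and $\lambda=\pm1$ according to the index bookkeeping. First I would set up the universal moduli space: the operators $\Psi^\lambda_{t,\vg,\gamma}$ assemble into a bundle map $\Psi^\lambda_{g_0,g_1}:\Fcal^\lambda_{g_0,g_1}\ra\Ecal^\lambda_{g_0,g_1}$ over the $C^1$ Banach manifold $\Ycal_{g_0,g_1}$ (which is a manifold by Theorem~\ref{thm:transversality}, since $g_0,g_1\in\Gcal^\bullet\subset\Gcal^*$). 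The zero locus of the induced section is exactly $\Wcal^\lambda_{g_0,g_1}$ minus the zero section; to see it is a $C^1$ Banach manifold I must check that the fiberwise differential
\[
D\Psi^\lambda_{g_0,g_1}:T_{(t,\vg,\gamma)}\Ycal_{g_0,g_1}\oplus A^\lambda_2(N_\gamma)\ra A^\lambda_0(N_\gamma)
\]
is surjective at every $(t,\vg,\gamma,\xi)$ with $\xi\neq0$. This is where I would recycle the argument from the proof of Theorem~\ref{thm:super-rigidity} almost verbatim: the tangent space $T_{(t,\vg,\gamma)}\Ycal_{g_0,g_1}$ contains variations of the path $\vg$ supported near any given parameter value $t$ and, frozen at time $t$, near any point $z_0=\gamma(\varphi_d(\theta_0))$; plugging in a metric variation $h_f$ as before, the cokernel element $\zeta\in\Ker(D^*)$ must satisfy $\sum_j\lambda^{-j}(\psi^j_d)^*(\xi+i\xi')=d\cdot(\xi+i\xi')=0$, forcing $\xi=0$, a contradiction. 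So $D\Psi^\lambda_{g_0,g_1}$ is surjective and $\Wcal^\lambda_{g_0,g_1}$ is cut out transversely as a separable $C^1$ Banach manifold.

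Next I would compute the index of the fibering map $\widetilde p^\lambda_{g_0,g_1}:\widetilde{\Wcal}^\lambda_{g_0,g_1}=\Wcal^\lambda_{g_0,g_1}/(S^1\times\R^*)\ra\Pcal_{g_0,g_1}$. Relative to the unparametrized situation, $\Ycal_{g_0,g_1}$ has one extra dimension over $\Ycal$ (the interval direction $t$, or equivalently the extra parameter in the path space), so the index of $p^\lambda:\widetilde{\Wcal}^\lambda\ra\Gcal$, which equals $-1$ by Theorem~\ref{thm:super-rigidity}, becomes index $0$ for $\widetilde p^\lambda_{g_0,g_1}$. Now apply the Sard--Smale theorem: the set $\Pcal^\lambda_{g_0,g_1}$ of regular values of $\widetilde p^\lambda_{g_0,g_1}$ is a Baire subset of $\Pcal_{g_0,g_1}$, and for $\vg\in\Pcal^\lambda_{g_0,g_1}$ the fiber $\widetilde{\Wcal}^\lambda(\vg)=(\widetilde p^\lambda_{g_0,g_1})^{-1}(\vg)$ is a $C^1$ manifold of dimension equal to the index, namely $0$. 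Before descending to the quotient I should note that the $S^1\times\R^*$-action on $\Wcal^\lambda_{g_0,g_1}$ is free (the $\R^*$-factor scales the nonzero Jacobi field $\xi$, the $S^1$-factor rotates $\gamma$, and for $(t,\vg,\gamma)\in\Ycal_{g_0,g_1}$ the geodesic $\gamma$ is not multiply covered so its rotational stabilizer is trivial), hence the quotient is genuinely a $C^1$ Banach manifold and the index count on $\widetilde{\Wcal}^\lambda$ is legitimate.

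Finally I would dispatch the dichotomy. For $\lambda\neq\pm1$: by Proposition~\ref{prop:main-intro} (equivalently the last sentence of Theorem~\ref{thm:super-rigidity}) the image of $\widetilde{\Wcal}^\lambda$ in $\Gcal$ has codimension $2$; reparametrized over the $1$-parameter family $\vg:[0,1]\ra\Gcal$, a generic path misses this codimension-$2$ bad locus, which is exactly the assertion that $\widetilde{\Wcal}^\lambda(\vg)$ is empty for $\vg$ in a Baire set — this is precisely what the index-$0$ Sard--Smale conclusion gives once one observes that for $\lambda\neq\pm1$ the relevant operator $D^\lambda_{g,\gamma}$ already has a $2$-dimensional discrepancy (cf. the index-$(-1)$ statement must in fact be refined to index $-2$ on the $\R^*$-quotient alone for $\lambda\ne\pm1$, because the complex structure pairs $\xi$ with its twin $\xi'$); in any case the upshot is that the parametrized fiber is empty. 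For $\lambda=\pm1$: here $D^{\pm1}_{g,\gamma}$ is genuinely Fredholm of index $0$ on a real space with no twin to kill, so the bad locus in $\Gcal$ has codimension $1$ and a generic path $\vg$ crosses it transversely in a discrete set of times; correspondingly $\widetilde{\Wcal}^{\pm1}(\vg)$ is a $0$-manifold, as claimed. The main obstacle I anticipate is keeping the index arithmetic honest across the three quotients (by $S^1$, by $\R^*$, and by the path-space parameter) and making precise why the codimension jumps between $\lambda\neq\pm1$ and $\lambda=\pm1$ — that is, explaining cleanly that the complex-linear structure forced by the twin $\xi'$ doubles the real codimension when $\lambda\ne\pm1$; everything else is a routine transcription of the proof of Theorem~\ref{thm:super-rigidity} with an extra parameter carried along.
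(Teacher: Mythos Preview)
Your setup for the manifold structure of $\Wcal^\lambda_{g_0,g_1}$ and the Sard--Smale application match the paper exactly: the surjectivity of $D\Psi^\lambda_{g_0,g_1}$ is proved by transplanting the $h_f$-variation argument from Theorem~\ref{thm:super-rigidity}, and for $\vg$ in the Baire set of regular values $\widetilde{\Wcal}^\lambda(\vg)$ is a $0$-manifold. Your treatment of $\lambda=\pm1$ is also fine.

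The gap is in your handling of $\lambda\neq\pm1$. The index of $p^\lambda:\widetilde{\Wcal}^\lambda\to\Gcal$ really is $-1$, not $-2$; nothing in Theorem~\ref{thm:super-rigidity} needs to be ``refined''. Your attempt to push the codimension to $2$ by invoking the complex structure on the $\R^*$-quotient is not a correct index computation: $S^1\times\R^*$ is a $2$-dimensional group regardless of $\lambda$, and the Fredholm index of the fibering map does not see the twin. What the twin \emph{does} give you is extra structure on the fibers, and this is precisely what the paper exploits, but by a different and much shorter route than yours. The paper's argument: for a regular value $\vg$ the fiber $\widetilde{\Wcal}^\lambda(\vg)$ is a $0$-manifold; if $\lambda\neq\pm1$ and $(t,\gamma,\xi)\in\Wcal^\lambda(\vg)$, then the twin $\xi'$ is also in $\Ker(\Psi^\lambda_{t,\vg,\gamma})$, and the real span $\{a\xi+b\xi':(a,b)\neq(0,0)\}$ descends, after dividing by $\R^*$, to a copy of $\mathbb{RP}^1\cong S^1$ inside $\widetilde{\Wcal}^\lambda(\vg)$. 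A $0$-manifold contains no circle, so $\widetilde{\Wcal}^\lambda(\vg)$ must be empty.

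If you want to rescue your codimension idea, the honest version is to pass to the further quotient $\Wcal^\lambda/(S^1\times\C^*)$ when $\lambda\neq\pm1$ (using the complex structure on $A^\lambda_*(N_\gamma)$), which does have index $-2$ over $\Gcal$ and hence index $-1$ over $\Pcal_{g_0,g_1}$; emptiness of this quotient for regular $\vg$ then pulls back along the $S^1$-bundle $\widetilde{\Wcal}^\lambda\to\Wcal^\lambda/(S^1\times\C^*)$. That works, but the paper's ``circle in a $0$-manifold'' contradiction is cleaner and avoids setting up a second quotient.
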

\begin{proof}
The first claim is proved similar to Theorem~\ref{thm:super-rigidity}, by showing the surjectivity of $D\Psi^\lambda_{g_0,g_1}$. $\widetilde{\Wcal}^\lambda(\vg)$ is a $0$-manifold for a regular value $\vg$ of the projection map from  $\Wcal^\lambda_{g_0,g_1}$ to $\Pcal_{g_0,g_1}$. If $\lambda\neq \pm 1$,  $(t,\gamma,\xi)\in \Wcal^\lambda(\vg)$ and its twin $(t,\gamma,\xi')\in \Wcal^\lambda(\vg)$, produce a copy of $\mathbb{RP}^1\simeq S^1$ in $\widetilde{\Wcal}^\lambda(\vg)$. For a regular value $\vg$,  $\Wcal^\lambda(\vg)$ is thus empty unless $\lambda=\pm 1$. 
\end{proof}

For every path $\vg$ connecting $g_0$ to $g_1$ which is in the Bair subset 
\[\Pcal_{g_0,g_1}^{** }:=\bigcap_\lambda\Pcal_{g_0,g_1}^{\lambda}\subset\Pcal_{g_0,g_1},\] 
all closed geodesics in  $\Ycal(\vg)$ are super-rigid, except for a discrete subset $\Ycal_J(\vg)\subset\Ycal(\vg)$. Here, the intersection is over all roots $\lambda$ of unity. We may further refine $\Pcal_{g_0,g_1}^{**}$ and assume that for every $\vg$ in this set, the projection map from $\Ycal_J(\vg)$ to $[0,1]$ is injective, and that for $(t,\gamma)\in\Ycal_J(\vg)$ one of the following happens. Either $\Ker(\Psi_{t,\vg,\gamma\circ\varphi_d})$ is $1$-dimensional for all $d$, and is generated by $\varphi_d^*(\xi)$ for some $\xi\in\Ker(\Psi_{t,\vg,\gamma})$, or it is $1$-dimensional for all even $d$ (generated by $\varphi_{d/2}^*(\xi)$ for some $-1$-Jacobi field $\xi\in\Ker(\Psi_{t,\vg,\gamma\circ\varphi_2})$) and is trivial for odd $d$. In the latter case, since $d\nabla_{t,\vg,\gamma\circ\varphi_2}(\xi)$ is zero in $A_0(N_\gamma)$, the second differentials 
\[d^2\nabla_{t,\vg,\gamma}(\xi\otimes\xi)\quad\text{and}\quad d^2\nabla_{t,\vg,\gamma}(\xi\otimes v_h)\]
 are well-defined in $A_0(N_\gamma)$, and in particular in $C=\Coker(\Psi_{\vg(t),\gamma}^{-1})$. Here, $v_h\in T_{t,\gamma}\Ycal(\vg)$ projects to $h=\partial\vg/\partial s|_{s=t}$ in $\Hcal$ under the differential of the map from $\Ycal(\vg)$ to $\Gcal$ which sends $(s,\gamma')$ to $\vg(s)$. The section $d^2\nabla_{t,\vg,\gamma}(\xi\otimes\xi)$ is of type $+1$, and is thus automatically trivial in $C$.  Again, since $d^2\nabla_{t,\vg,\gamma}(\xi\otimes \xi)$ is zero in $C$, the third differential $d^3\nabla_{t,\vg,\gamma}(\xi\otimes \xi\otimes\xi)$ is well-defined in $C$, and we may refine $\Pcal_{g_0,g_1}^{**}$ one more time to obtain the Bair subset $\Pcal_{g_0,g_1}^{\bullet}$ of $\Pcal_{g_0,g_1}$ such that all the above properties are satisfied, and  for every $(t,\gamma)\in\Ycal_J(\vg)$ which is equipped with a $-1$-Jacobi field $\xi$, both $d^2\nabla_{t,\vg,\gamma}(\xi\otimes v_h)$ and $d^3\nabla_{t,\vg,\gamma}(\xi\otimes\xi\otimes\xi)$ are non-zero in $\Coker(\Psi_{\vg(t),\gamma}^{-1})$. The paths in $\Pcal_{g_0,g_1}^{\bullet }$ are called the {\emph{bumpy paths}} connecting $g_0$ to $g_1$.


\section{Paths of metrics and the weight of closed geodesics}\label{sec:invariance}
Fix $g\in\Gcal$ and $\gamma\in\Zcal(g)$. For simplicity, assume for the moment that $W_\gamma$ may be trivialized along $\gamma$. The general case may be handled with minor modifications. The trivialization of $W_\gamma$ as $W_\gamma=S^1\times \R^n$ may be chosen such that the last component corresponds to $T_\gamma$, and the standard frame on $\R^n$ is orthonormal with respect to $g$.  $\Psi_{g,\gamma}$ may be expressed in these local coordinates by 
\begin{displaymath}
\Psi_{g,\gamma}(\zeta^1,\ldots,\zeta^{n-1})=\sum_{j=1}^{n-1}\Big(\ddot\zeta^j+\sum_{k=1}^{n-1}\big(A^j_k\dot\zeta^k+B^j_k\zeta^k\big)\Big)\cdot e_j.
\end{displaymath}
 Here, $e_j$ is the $j$-th coordinate vector in $\R^n$, while $\zeta^j,\dot\zeta^j$ and $\ddot{\zeta}^j$ denote the $j$-th coordinates of $\zeta,d\zeta/d\theta$ and $d^2\zeta/d\theta^2$, respectively. The pair of matrix-valued functions \[\left(A=\left(A^j_k\right)_{j,k},B=\left(B^j_k\right)_{j,k}\right)\]
  are then the sections of a trivial bundle of rank $2(n-1)^2$ over $S^1$. Denote the contractible space of all $(A,B)$ as above by $\Acal$. Every $(A,B)\in\Acal$ corresponds to an index-$0$ Fredholm map  
\begin{align*}
&L_{A,B}:A_2(\R^{n-1})\ra A_0(\R^{n-1}),\quad\quad L_{A,B}\Big(\sum_j\zeta^je_j\Big):=\sum_{j}\Big(\ddot\zeta^j+\sum_k\big(A^j_k\dot\zeta^k+B^j_k\zeta^k\big)\Big)\cdot e_j.
\end{align*}
 Given $(A,B)\in\Acal$, define the {\emph{nullity}} $\nu(A,B)=\nu(L_{A,B})$ as the dimension of the kernel of $L_{A,B}$ and the {\emph{index}} $\imath(A,B)=\imath(L_{A,B})$ as  the maximum dimension of a finite dimensional subspace $V\subset A_2(\R^{n-1})$ such that the restriction of the quadratic form 
\begin{align*}
q_{A,B}:A_2(\R^{n-1})\otimes A_2(\R^{n-1})\ra \R,\quad\quad
q_{A,B}(\zeta,\xi):=\int_{S^1}\langle L_{A,B}(\zeta),\xi\rangle dt
\end{align*}
to $V$ is positive definite (compare with \cite[Section 1]{White-1}). $(A,B)\in\Acal$ is called {\emph{negatively curved}} if $q_{A,B}$ is negative definite (i.e. $\nu(A,B)=\imath(A,B)=0$). The negatively curved points form a convex subset of $\Acal$. Let $\Acal^r$ denote the subspace of $\Acal$ consisting of $(A,B)$ such that $\nu(A,B)=r$. It is standard to see that  $\Acal^r$ is an (open) analytic sub-variety of $\Acal$ of codimension at least $r$. The closure of $\Acal^1$ (called the {\emph{walls}}) cuts $\Acal^0\subset \Acal$ into chambers. When the walls are crossed transversely, $\imath(A,B)$ changes by $\pm1$. Having fixed the trivialization of $N_\gamma$,  the index $\imath(g,\gamma)=\imath(\Psi_{g,\gamma})$ and the nullity $\nu(g,\gamma)=\nu(\Psi_{g,\gamma})$ for $(g,\gamma)\in\Zcal$ may be defined, and is independent of the choice of the trivialization. A similar construction is possible when $N_\gamma$ is not trivial.

\begin{defn}\label{defn:sign}
For a super-rigid $(g,\gamma)\in\Ycal$ and $d\in\Z^+$ define 
\[\epsilon_d(g,\gamma)=(-1)^{\imath(g,\gamma\circ\varphi_{d})}\in\{+1,-1\}\quad\text{and}\quad
n_d(g,\gamma)=\begin{cases}
\epsilon_1(g,\gamma)&\text{if}\ \ d=1\\ (\epsilon_2(g,\gamma)-\epsilon_1(g,\gamma))/2&\text{if}\ \ d=2\\  0&\text{if}\ \  d>2
\end{cases}.
\] 
The integer $n(g,\gamma\circ\varphi_d)=n_d(g,\gamma)$ is called the {\emph{weight}} of the closed $g$-geodesic $\gamma\circ \varphi_d$. 
If  $\Gamma\subset\Zcal(g)$ is  a finite set of multiple covers of super-rigid $g$-geodesics, define $n(g,\Gamma)=\sum_{\gamma\in\Gamma}n(g,\gamma)$. 
\end{defn}

If $g$ is negatively curved in a neighborhood of $\gamma\in\Ycal(g)$, $n_d(g,\gamma)$ is $1$ if $d=1$ and is $0$ otherwise. 

\begin{thm}\label{thm:invariance-count-function}
Fix  $g_0,g_1\in\Gcal^{\bullet }$ and  $\vg\in\Pcal^{\bullet }_{g_0,g_1}$. If $\vGamma\subset \Zcal(\vg)$ is  compact and open, and 
\[\Gamma_j=\vGamma\cap\Zcal(g_j)\quad\quad \text{for}\ \  j=0,1,\] 
then $n(g_1,\Gamma_1)=n(g_0,\Gamma_0)$.
\end{thm}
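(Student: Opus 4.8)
The plan is to analyze the one-dimensional cobordism $\widetilde{\Ycal}(\vg)$ together with the local structure of $\widetilde{\Zcal}(\vg)$ near the bad locus $\Ycal_J(\vg)$, and to show that the total weight $n(\cdot,\Gamma_j)$ is constant along $[0,1]$ by checking that it is locally constant in $t$ and that nothing is lost at the finitely many $t$ where the structure degenerates. First I would use the compactness and openness of $\vGamma$ to reduce to a ``local'' statement: for each $t_*\in[0,1]$ there is $\epsilon>0$ so that $n(\vg(t),\Gamma\cap\Zcal(\vg(t)))$ is constant for $t\in(t_*-\epsilon,t_*+\epsilon)$ whenever the endpoints of that interval give bumpy metrics. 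Because $\vGamma$ is compact and $\pz$ restricted to it is proper onto a neighborhood of $t_*$ in $[0,1]$, the geodesics in $\vGamma$ lying over nearby $t$ form a compact family; away from $\Ycal_J(\vg)$ every $(t,\gamma)\in\Ycal(\vg)$ is super-rigid, so $\widetilde{\Ycal}(\vg)$ is an honest $C^1$ $1$-manifold there (Theorem~\ref{thm:transversality}), its multiple covers contribute $0$ to the weight for $d>2$, and the index $\imath(\vg(t),\gamma\circ\varphi_d)$ for $d=1,2$ is locally constant except where a wall in $\Acal$ is crossed. Walls are crossed exactly at points of $\Ycal_J(\vg)$, so on each component of $[0,1]\setminus\pz(\Ycal_J(\vg))$ the weight is constant.

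Next I would handle a crossing. Fix $t_*$ with a unique $(t_*,\gamma_*)\in\Ycal_J(\vg)$ lying in $\vGamma$ (the projection of $\Ycal_J(\vg)$ to $[0,1]$ is injective by the choice of $\Pcal^\bullet_{g_0,g_1}$); all other geodesics in $\vGamma$ over $t$ near $t_*$ vary smoothly and contribute the same weight on both sides, so I must compare $n(\vg(t_*-\epsilon),\cdot)$ and $n(\vg(t_*+\epsilon),\cdot)$ only through the geodesics of $\vGamma$ that converge to covers of $\gamma_*$ as $t\to t_*$. There are two cases from the dichotomy set up before Section~\ref{sec:invariance}. In the first case, $\Ker(\Psi_{t_*,\vg,\gamma_*\circ\varphi_d})$ is $1$-dimensional for every $d$ and generated by $\varphi_d^*\xi$ for a single $+1$-Jacobi field $\xi$: here $\widetilde{\Ycal}(\vg)$ itself is still a $1$-manifold through $(t_*,\gamma_*)$ (the relevant operator $\wPsi$ is surjective because $\vg$ is a regular value of the fibering map), the base point $\gamma_*$ crosses exactly one wall with $\imath(g,\gamma_*)$ jumping by $\pm1$, and simultaneously $\imath(g,\gamma_*\circ\varphi_2)$ jumps by the same $\pm1$ because the relevant eigenvalue of the family $L_{A,B}$ on $A_2^{+1}(N)$ crosses zero in lockstep. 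Then $\epsilon_1$ and $\epsilon_2$ both flip sign, so $n_1=\epsilon_1$ flips but $n_2=(\epsilon_2-\epsilon_1)/2$ is unchanged; meanwhile there is no new geodesic appearing, so the count of the geodesic $\gamma_*$ itself changes by $\pm2$ in $n_1$... which I must cancel. Here is where I need the first bullet of Theorem~\ref{thm:main-intro} together with a birth/death analysis: as $t$ crosses $t_*$ the $1$-manifold $\widetilde{\Zcal}(\vg)$ may have a fold, so that two geodesics (covers of the perturbed $\gamma_*$) are born or die on one side, and their signs $\pm1$ are opposite by the orientation of the cobordism, so their contributions cancel in pairs and the net change across the wall is zero. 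This is the standard Morse-theoretic cancellation: a local minimum/maximum of $t$ restricted to $\widetilde{\Ycal}(\vg)$ contributes a cancelling pair.

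The serious case, and what I expect to be the main obstacle, is the second: $\gamma_*$ carries a $-1$-Jacobi field $\xi$, so $\widetilde{\Zcal}(\vg)$ is \emph{not} a manifold near $(t_*,\gamma_*\circ\varphi_2)$. Here I would invoke the Kuranishi model promised in the introduction: a neighborhood of the relevant point of $\widetilde{\Zcal}(\vg)$ is identified with the zero set of a map $F\colon\R^2\to\R$ whose leading behavior is governed by $d^2\nabla_{t_*,\vg,\gamma_*}(\xi\otimes v_h)$ and $d^3\nabla_{t_*,\vg,\gamma_*}(\xi\otimes\xi\otimes\xi)$, both of which are nonzero in $\Coker(\Psi^{-1}_{\vg(t_*),\gamma_*})$ by the definition of $\Pcal^\bullet_{g_0,g_1}$. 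Concretely $F$ looks like $F(s,a)=c_1 sa + c_2 a^3 + (\text{higher order})$ with $c_1,c_2\neq0$, whose zero locus is $\{a=0\}\cup\{c_1 s + c_2 a^2 = 0\}$: the branch $\{a=0\}$ is the family of double covers $\gamma\circ\varphi_2$ of the (super-rigid, smoothly varying) primitive geodesic near $\gamma_*$, while the parabolic branch is a pair of genuinely new double-cover geodesics that exist on exactly one side $s>0$ or $s<0$ of $t_*$ and collide at $t_*$. I would then compute the indices $\imath$ along each branch: on the $a=0$ branch, $\epsilon_2$ flips sign across $t_*$ (one eigenvalue on $A_2^{-1}$ passes through $0$) while $\epsilon_1$ does not (the $+1$-spectrum is unaffected), so $n_2=(\epsilon_2-\epsilon_1)/2$ of $\gamma_*\circ\varphi_2$ changes by $\pm1$; and this change must be exactly compensated by the pair of new geodesics on the parabolic branch, whose own weights $n_1$ (they are primitive as double covers? — no, they are themselves multiply covered by $\varphi_2$, so they contribute via... ) I must bookkeep carefully, but by construction of Definition~\ref{defn:sign} the two new geodesics appear with $n_1$-values summing to $\mp1$ after accounting for their indices, so the total $n(\cdot,\Gamma)$ is unchanged. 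The crux is verifying that the Kuranishi $F$ really has this normal form under the stated nondegeneracy hypotheses and that the index changes are as claimed; once that local computation is in hand, summing over the finitely many crossing times $t$ (finite because $\Ycal_J(\vg)$ is discrete and $\vGamma$ compact) and over the locally constant pieces in between yields $n(g_1,\Gamma_1)=n(g_0,\Gamma_0)$.
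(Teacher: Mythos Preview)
Your overall architecture matches the paper's: show $n(g_s,\Gamma_s)$ is locally constant on $[0,1]\setminus\{t_1,\ldots,t_m\}$, then analyze the finitely many crossings via a Kuranishi model. The normal form $F(s,a)\sim c_1 sa+c_2 a^3$ in the $-1$-Jacobi case is also the paper's. However, both local analyses have genuine gaps.

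In the $+1$-Jacobi case you have the picture wrong. If $\xi$ is a $+1$-Jacobi field along $\gamma_*$ then $\Ker(\Psi_{g_{t_*},\gamma_*})\neq 0$, so $(t_*,[\gamma_*])$ is a \emph{critical} point of $q:\widetilde{\Ycal}(\vg)\to[0,1]$; no branch of $\widetilde{\Ycal}(\vg)$ passes through with $t$ monotone. The local model is a pure fold: two primitive geodesics $\gamma_s^\pm$ exist on one side of $t_*$ and none on the other, with $\epsilon_j(\gamma_s^+)=-\epsilon_j(\gamma_s^-)$ for every $j$, so all weights cancel. There is no through-branch on which ``$\epsilon_1$ and $\epsilon_2$ both flip'' and hence no ``$\pm 2$ change in $n_1$'' to explain; your paragraph superimposes two incompatible pictures.

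In the $-1$-Jacobi case two ingredients are missing. First, the rotation $\psi_2$ induces an involution on the Kuranishi chart, and the two points $(s,\pm a)$ on the parabola $c_1 s+c_2 a^2=0$ are identified under it: there is \emph{one} new primitive geodesic $\gamma'_s$ per $s\in J$, not two. Your parenthetical ``they are themselves multiply covered by $\varphi_2$'' is incorrect --- $\gamma'_s$ is primitive, merely close to $\gamma_s\circ\varphi_2$. Second, you assert the signs work out ``by construction'', but this computation is the crux. The paper gets $\epsilon_1(\gamma'_s)=-\epsilon_2(\gamma_s)$ by concatenating $\{\Psi_{g_s,\gamma_s\circ\varphi_2}\}_{s\le t_*}$ with $\{\Psi_{g_{2t_*-s},\gamma'_{2t_*-s}}\}_{s>t_*}$ into a single path of operators crossing the wall exactly once, and gets $\epsilon_2(\gamma'_s)=-\epsilon_4(\gamma_s)=-\epsilon_2(\gamma_s)$ via a separate argument that a path to the negatively curved chamber can avoid the $\lambda=\pm i$ walls (so $\epsilon_4=\epsilon_2$ always). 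Only with these identities can one check that $n_2(\gamma_s)+n_1(\gamma'_s)$ on side $J$ equals the flipped $n_2(\gamma_s)$ on the other side, and that the higher covers $\gamma'_s\circ\varphi_{d/2}$ near $\gamma\circ\varphi_d$ contribute zero.
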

\begin{proof}
 Let $q$ denote the projection from $\widetilde{\Ycal}(\vg)$ over  $[0,1]$ and  $g_t=\vg(t)$.  $\vGamma=\cup_d \vGamma^d$, where $\vGamma^d\subset\vGamma$ consists of the degree-$d$ covers of somewhere injective geodesics. Let $\tGamma^d$ denote the set of all  $(t,\gamma)$ such that $(t,\gamma\circ\varphi_d)\in\vGamma^d$. Then $\tGamma=\cup_d\tGamma^d$ is an open subset of $\Ycal(\vg)$. For $t\in[0,1]$, denote the set of all pairs $(t,\gamma)$ in $\vGamma,\vGamma^d$ and $\tGamma^d$ by $\Gamma_t,\Gamma_t^d$ and $\tGamma_t^d$, respectively. Since $\vGamma$ is compact,  $(\Ycal_J(\vg)\cap\tGamma)/S^1$ is finite and its image under $q$ is a finite set of values $0<t_1<\cdots<t_m<1$. If $(t,\gamma)\in\tGamma$ for some 
 \[t\in[0,1]\setminus\{t_1,t_2,\ldots,t_m\},\]  
 $\Psi_{g_t,\gamma\circ\varphi_d}$ is bijective. Therefore, there is a map 
 \[\tau:I=(t-\epsilon,t+\epsilon)\ra \widetilde{\Ycal}(\vg)\] 
 represented by $s\mapsto(s,\gamma_s)\in\Ycal(\vg)$, such that $\gamma_t=\gamma$, $q\circ\tau$ is the identity map over $I$,  and the only points in a neighborhood of $(t,\gamma\circ\varphi_d)$ in $\widetilde{\Zcal}(\vg)$ are $\{(s,[\gamma_s\circ \varphi_d])\}_{s\in I}$.  Moreover, $\epsilon_k(g_s,\gamma_s)=\epsilon_k(g_t,\gamma_t)$ for $s\in I$ and $k\in\Z^+$, which implies that $n(g_s,\Gamma_s)$ is locally constant on $[0,1]\setminus\{t_1,\ldots,t_m\}$.\\

Next, let $t=t_k$ and $(t,[\gamma])\in\widetilde{\Ycal}_J(\vg)$ be the only curve in $q^{-1}\{t\}\cap\widetilde{\Ycal}_J(\vg)$. Then there is either a  $-1$-Jacobi field $\xi$ along $\gamma$ (case $1$), or a  $+1$-Jacobi field $\xi$ (case $2$). In case $1$, $\Psi_{g_t,\gamma}$ is bijective. Therefore, $(t,[\gamma])$ is a regular point of the projection map $q:\widetilde{\Ycal}(\vg)\ra [0,1]$ and for a sufficiently small $\epsilon>0$, there is a local inverse 
\[\tau_1:I=(t-\epsilon,t+\epsilon)\ra \widetilde{\Ycal}(\vg),\]
 represented by $\tau_1(s)=(s,[\gamma_s])$, where $(s,\gamma_s)\in\Ycal(\vg)$ and $(t,[\gamma])$ is not the limit of any sequence in $\widetilde{\Zcal}(\vg)\setminus \tau_1(I)$. Fix an identification of a tubular neighborhood of $\gamma$ in $M$ with a neighborhood of the zero section in $N_\gamma$. If $(s,\beta)\in \Zcal(\vg)$ and the image of $\beta$ is sufficiently close to the image of $\gamma\circ\varphi_2$, $\beta\circ\rho$ is identified with a section in $A_2(N_{\gamma\circ\varphi_2})$ and $N_{\beta\circ\rho}$ is naturally identified with $N_{\gamma\circ \varphi_2}$. Moreover, $\rho$ is determined by $\beta$ upto a composition with the rotation $\psi_2:S^1\ra S^1$ (by $\pi$).\\

 Sending $(s,\beta)$ to $\nabla(g_s,\beta)\in A_0(N_\beta)$ gives a function 
 \[G:I\times A_2(N_{\gamma\circ\varphi_2})\ra A_0(N_{\gamma\circ\varphi_2}),\]
  and every closed geodesic in a  neighborhood of $(t,[\gamma\circ\varphi_2])$ in $\widetilde{\Zcal}(\vg)$ corresponds to a zero of $G$ in  a neighborhood of $(t,0)$.  As discussed above, $(s,\beta)$ and $(s,\beta\circ\psi_2)$ correspond to the same point in $\widetilde{\Zcal}(\vg)$, while they correspond to different zeros of $G$ if $\beta$ is not the double-cover of another geodesic. This observation implies that there is a map $\imath'$ and an involution $\jmap'$ with  
  \[\imath':I\times A_2(N_{\gamma\circ\varphi_2})\ra A_2(N_{\gamma\circ\varphi_2})\quad\text{and}\quad \jmath':A_0(N_{\gamma\circ\varphi_2})\ra A_0(N_{\gamma\circ\varphi_2}),\]
  which satisfy $\imath'(s,\imath'(s,\zeta))=\zeta$ and $\jmath'(0)=0$, such that $G(s,\imath'(s,\zeta))=\jmath'(F(s,\zeta))$ for all $(s,\zeta)\in I\times A_2(N_{\gamma\circ\varphi_2})$. The differential of $G$ at $(t,0)$ is the restriction of $\wPsi_{g_t,\gamma\circ\varphi_2}$ to $\langle h\rangle_\R\oplus A_2(N_{\gamma\circ\varphi_s})$. Therefore, 
  \[\Ker(dG_{t,0})=\R\oplus\Ker(\Psi^{-1}_{g_t,\gamma})\quad\text{and}\quad\Coker(dG_{t,0})=\Coker(\Psi^{-1}_{g_t,\gamma}),\]
 where the generators of the kernel are given as $v_h=(h,\varphi_2^*(\xi'))$ and $(0,\xi)$, with $\xi'\in A_2^{+1}(N_\gamma)$ and $\xi\in A_2^{-1}(N_\gamma)$. Let $H$ denote $A_2(N_{\gamma\circ\varphi_2})/\langle\xi\rangle$ and $H'$ denote the image of $\Psi_{g_t,\gamma\circ\varphi_2}$ in $A_0(N_\gamma)$. We thus obtain the isomorphisms
\[A_2(N_{\gamma\circ \varphi_2})\simeq  \R\oplus H=\langle \xi\rangle_\R\oplus H \quad\quad\text{and}\quad\quad A_0(N_{\gamma\circ \varphi_2})\simeq \R\oplus H'=\Coker(\Psi^{-1}_{g_t,\gamma})\oplus H'.\]
Let $G(s,r,y)=(G_1(s,r,y),G_2(s,r,y))$, where $y\in H$ and $G_2(s,r,y)\in H'$. The differential of  $G_2$ with respect to $y$ at $(t,0,0)$ is an invertible map from $H$ to $H'$. Therefore,  $G_2(s,r,y)=0$ may be solved to find $y=y(s,r)$. Then  $G(s,r,y)=0$ is equivalent to  $F(s,r):=G_1(s,r,y(s,r))=0$. We already know that $(s,\gamma_s\circ\varphi_2)$ (i.e. the double cover of the path $\tau_1:I\ra \Ycal(\vg)$) corresponds to a path $s\mapsto (s,r(s))$ for $s\in I$ such that $F(s,r(s))=0$. After a change of variable in the domain of $F$ (and for simplicity), we may assume that $r(s)=0$ for all $s\in I$.\\

 It  follows that $\imath'$ induces a map $\imath:I\times \R\ra \R$ satisfying $\imath(s,\imath(s,r))=r$ and $\jmath'$ induces an involution $\jmath:\R\ra\R$ with $\jmath(0)=0$, such that $F(s,\imath(s,r))=\jmath(F(s,r))$ for all $(s,r)$ close to $(t,0)$. Moreover, $\imath(s,0)=0$ for all $s$. A neighborhood of $(t,[\gamma\circ\varphi_2])$ in $\widetilde{\Zcal}(\vg)$ is then identified with a neighborhood of $(t,0)$ in the zero set of $F$, which is mod out by the action of the aforementioned involution. Since $\xi\in\Ker(\Psi^{-1}_{g_t,\gamma})$, it follows that $\partial F/\partial r(t,0)=0$. Moreover,  $\partial^2F/\partial r^2(t,0)$ is automatically zero, since it is the image of a section of type $+1$ in $\Coker(\Psi_{g_t,\gamma}^{-1})$. Furthermore, $\partial^3F/\partial r^3(t,0)\neq 0$ and $\partial^2F/\partial r\partial s(t,0)\neq 0$ since $\vg\in\Pcal^{\bullet}_{g_0,g_1}$. Thus, in a neighborhood of $(t,0)$,  there is a presentation 
\begin{align*}
&F(s,r)=r((s-t)f(s,r)-r^2g(r)),\quad\quad\text{where}\quad f:I\times\R\ra \R,\ \  g:\R\ra \R\ \ \text{and}\ \ g(0),f(t,0)\neq 0.
\end{align*}

Therefore, there is an interval $J$, which is either $(t-\epsilon,t)$ or $(t,t+\epsilon)$, and  the maps 
\[\tau_1:I\ra \widetilde{\Ycal}(\vg)\quad\text{and}\quad \tau_2:J\ra \widetilde{\Ycal}(\vg)\] 
giving a local model for $\widetilde{\Zcal}(\vg)$ near $(t,[\gamma\circ\varphi_2])$ via (c.f. \cite[Lemma 5.11]{Taubes}):
\begin{itemize}
\item $q\circ \tau_1$ and $q\circ \tau_2$ are the identity maps of $I$ and $J$, respectively.
\item $\tau_1(t)=(t,[\gamma])$ and $(t,[\gamma\circ\varphi_2])$ is the limit of $\tau_2(s)$ as $s$ approaches $t$. Moreover, $(t,[\gamma])$ and $(t,[\gamma\circ\varphi_2])$ are not the limit points of  $\widetilde{\Ycal}(\vg)\setminus \tau_1(I)$ and   $\widetilde{\Ycal}(\vg)\setminus \tau_2(J)$, respectively.
\item The kernels of $\Psi^\lambda_{\tau_1(s)}$ and $\Psi^\lambda_{\tau_2(s)}$ are trivial for all $\lambda$ and $s\neq t$. The kernel of $\Psi^\lambda_{\tau_1(t)}$  is trivial for $\lambda\neq -1$, while it is $1$-dimensional for $\lambda=-1$.
\end{itemize}  
For simplicity, let us assume that $J=(t-\epsilon,t)$. The other case is  similar. Let $\tau_1(s)=(s,[\gamma_s])$ and $\tau_2(s)=(s,[\gamma'_s])$. Then $\epsilon_1(g_s,\gamma'_s)=-\epsilon_2(g_s,\gamma_s)$, as the kernels of the operators in the path
\[\left\{\Psi_{g_s,\gamma_s\circ \varphi_2}\right\}_{s\in(t-\epsilon,t]}\cup\left\{\Psi_{g_{2t-s},\gamma'_{2t-s}}\right\}_{s\in(t,t+\epsilon)}\]
are trivial except at $s=t$, where it is $1$-dimensional. Similarly, it follows that 
\[\epsilon_2(g_s,\gamma_s')=-\epsilon_4(g_s,\gamma_s).\]
Moreover, for every $g$-geodesic $\beta$,  $\epsilon(\Psi_{g,\beta\circ \varphi_4})=\epsilon(\Psi_{g,\beta\circ \varphi_2})$, as  a path $(g^s,\beta^s)$ from $(g,\beta)$ to the negatively curved chamber may be chosen so that $\Psi^\lambda_{g^s,\beta^s}$ has trivial kernel for $\lambda=\pm i$.  Therefore, $\epsilon_2(g_s,\gamma_s)=\epsilon_4(g_s,\gamma_s)=-\epsilon_2(g_s,\gamma_s')$.  For $s\in (t,t+\epsilon)$ we have 
\[\epsilon_1(g_s,\gamma_s)=\epsilon_1(g_{2t-s},\gamma_{2t-s})\quad\quad\text{and}\quad\quad\epsilon_2(g_s,\gamma_s)=-\epsilon_2(g_{2t-s},\gamma_{2t-s}).\] 

\begin{figure}
\def\svgwidth{\textwidth}
\begin{center}
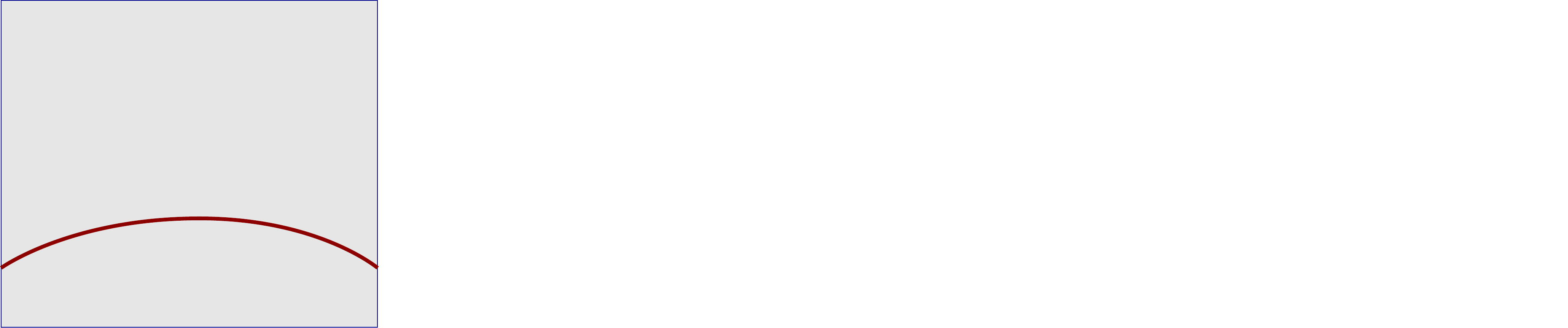
 \caption{\label{fig:local-contribution}
{A sequence in $\tGamma^{2}/S^1$ may converge to the double cover of $[\gamma]\in\tGamma^1/S^1$. The pair $(\epsilon_1,\epsilon_2)$  in a neighborhood $\tau_1(t-\epsilon,t+\epsilon)$ of $(t,[\gamma])$ in $\tGamma^1/S^1$ and a neighborhood  $\tau_2(t-\epsilon,t)\subset \tGamma^{2}/S^1$ of $(t,[\gamma\circ\varphi_2])$ follows one of the $4$ illustrated patterns.}}
\end{center}
\end{figure}

Possible values for $(\epsilon_1,\epsilon_2)$ over this local model are illustrated in Figure~\ref{fig:local-contribution} for $J=(t-\epsilon,t)$. If $d$ is odd, since $\Ker(\Psi_{g_t,\gamma\circ\varphi_d})$ is trivial,  a neighborhood of $(t,\gamma\circ\varphi_d)$ in $\Zcal(\vg)$ consists only of the pairs $(s,\gamma_s\circ\varphi_d)$ for $s\in I$. Similarly, when $d$ is even, since  $\Ker(\Psi_{g_t,\gamma\circ\varphi_d})$ is identified with $\varphi_{d/2}^*(\Ker(\Psi_{g_t,\gamma\circ\varphi_2}))$, a neighborhood of $(t,\gamma\circ\varphi_d)$ in $\Zcal(\vg)$ consists only of $(s,\gamma_s\circ\varphi_d)$ for $s\in I$ and the pairs $(s,\gamma'_s\circ\varphi_{d/2})$ for $s\in J$. Thus, the contribution of the aforementioned local model to $n(g_s,\Gamma_s)$ remains constant for $s\in I\setminus\{t\}$. This is trivial if $k>2$ from definition, and may be checked from the above model in  cases $k=1$ and $k=2$, separately, completing the study of case $1$.\\

In case $2$, $\Ker(\Psi_{g_t,\gamma\circ\varphi_d})$ and $\Coker(\Psi_{g_t,\gamma\circ\varphi_d})$ may be identified with the pull-backs of $\Ker(\Psi_{g_t,\gamma})$ and $\Coker(\Psi_{g_t,\gamma})$ under $\varphi_d^*$, and are both $1$-dimensional. Moreover, the image of $h=\partial \vg/\partial s|_{s=t}\in\Hcal$ under $\Psi^m_{g,\gamma}$ is non-trivial in $\Coker(\Psi_{g_t,\gamma})$. Note that $\Psi^m_{g_t,\gamma\circ\varphi_d}=\varphi_d^*\circ \Psi^m_{g_t,\gamma}$, which implies that the image of $h$ under $\Psi^m_{g,\gamma\circ\varphi_d}$ is non-trivial in $\Coker(\Psi_{g_t,\gamma\circ\varphi_d})$. In particular, $\widetilde{\Zcal}(\vg)$ has the structure of a $1$-manifold near $(t,[\gamma\circ\varphi_d])$ and is obtained by composing the geodesics in a neighborhood of $(t,[\gamma])\in\widetilde{\Zcal}(\vg)$ with the degree-$d$ map $\varphi_d$. An argument similar to case $1$ implies that $t$ is a critical value for $q$, and there is a an interval $J$, equal to $(t-\epsilon,t)$ or $(t,t+\epsilon)$, and the maps $\tau^{\pm}:J\ra \widetilde{\Ycal}(\vg)$ such that the following are satisfied (c.f. \cite[Lemma 5.9]{Taubes}):
\begin{itemize}
\item $q\circ\tau^+$ and $q\circ\tau^-$ are both the identity map of $J$.
\item $\tau^\pm(s)=(s,[\gamma^\pm_s])$ and $\gamma^\pm_s$ converge to the same curve $(t,\gamma_t=\gamma)\in{\Ycal}(\vg)$. Moreover, $(t,[\gamma\circ\varphi_d])$ is not the limit of any sequence in $\widetilde{\Zcal}(\vg)\setminus \{(s,[\gamma^\pm_s\circ\varphi_d]\}_{s\in J}$.
\item The kernel of $\Psi^\lambda_{\tau^\pm(s)}$ is trivial for $s\in J$ and any root $\lambda$ of unity.
\end{itemize}
Therefore, $\epsilon_j(g_s,\gamma_s^+)=-\epsilon_j(g_s\gamma_s^-)$ for $j\in\Z^+$. In particular, $n(g_s,\Gamma_s)$ remains constant for $s\neq t$ in a neighborhood $(t-\epsilon,t+\epsilon)$ of $t$. The above study, implies that $n(g_s,\Gamma_s)$ remains constant as  $s\in[0,1]$ passes any of the values $0<t_1<\cdots<t_m<1$. As we have already shown that $n(g_s,\Gamma_s)$ is constant on each connected component of $[0,1]\setminus\{t_1,\ldots,t_m\}$, the proof is complete. 
\end{proof}
\section{Non-generic Riemannian metrics and counting geodesics}\label{sec:generalizations}
Choose $g\in\Gcal$ and let $\Gamma$ be a compact and open subset of $\Zcal(g)$. There are thus bounded open subsets $\Ucal=\Ucal_{g,\Gamma}$ and $\Ucal'=\Ucal'_{g,\Gamma}$ of $\Xcal$ with $\overline{\Ucal}\subset\Ucal'$ and 
\[\Gamma=\Zcal(g)\cap\Ucal=\Zcal(g)\cap\Ucal'.\] If there is a sequence $\{g_j\}_j$ in $\Gcal$ which converges to $g$ such that $\Gamma_j=\Zcal(g_j)\cap \Ucal$ is not closed, then there are closed $g_j$-geodesics $\{\gamma^k_j\}_k\in\Ucal$ which converge to the $g_j$-geodesic $\gamma_j\in\Ucal\cap\Zcal(g_j)$. By passing to a sub-sequence, we may assume that $\{\gamma_j\}_j$ converges to a closed $g$-geodesic $\gamma\in\overline{\Ucal}\cap\Zcal(g)=\Gamma$. If $k_j$ is sufficiently large, the sequence $\{\gamma_j^{k_j}\}_j$ (which is outside $\Ucal$) will also converge to $\gamma$. But $\Xcal\setminus\Ucal$ is closed, and thus $\gamma$ should be in $\Xcal\setminus\Ucal$. Therefore, $\gamma\in\Gamma\cap(\Xcal\setminus\Ucal)=\emptyset$.  This contradiction implies that there is an open and path connected neighborhood $U=U_{g,\Gamma}$ of $g$ in $\Gcal$ such that for every $g'\in U$, the set $\Gamma_{g'}=\Zcal(g')\cap\Ucal$ is closed, and is thus compact and open (since $\Ucal$ is open and bounded). The choice of the open sets $\Ucal_{g,\Gamma}$ and $U_{g,\Gamma}$ is of course not unique. 

\begin{defn}\label{defn:contribution}
If $g\in\Gcal$ and $\Gamma$ is a compact and open subset of $\Zcal(g)$, the {\emph{weight}} $n(g,\Gamma)$ of $\Gamma$  is defined equal to $n(g',\Gamma')$, where $g'$ is an arbitrary metric in $\Gcal^{\bullet }\cap U_{g,\Gamma}$ and $\Gamma'=\Zcal(g')\cap\Ucal_{g,\Gamma}$. 
\end{defn}

\begin{thm}\label{thm:invariance}
If $\Gamma$ is a compact and open subset of $\Zcal(g)$ for some $g\in\Gcal$, then $n(g,\Gamma)$ is well-defined and independent of the choices made in Definition~\ref{defn:contribution}. Moreover, if  $\vGamma$ is a compact and open  subset of $\Zcal(\vg)$ for some $\vg\in\Gcal_{g_0,g_1}$, then $n(g_0,\Gamma_0)=n(g_1,\Gamma_1)$, where $\Gamma_j=\vGamma\cap\Zcal(g_j)$ for $j=0,1$.
\end{thm}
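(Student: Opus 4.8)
The plan is to reduce both assertions to the generic-path statement of Theorem~\ref{thm:invariance-count-function} by a gluing-of-paths argument, after first establishing that the quantity $n(g',\Gamma')$ appearing in Definition~\ref{defn:contribution} does not depend on the auxiliary data. For the well-definedness, suppose $g'_0,g'_1\in\Gcal^{\bullet}\cap U_{g,\Gamma}$ are two choices of bumpy metric, with corresponding open sets $\Ucal_0,\Ucal_1$ and $U_0,U_1$ (and I would first shrink $U_{g,\Gamma}$ so that a single pair $\Ucal\subset\Ucal'$ works for all metrics in it, which is possible by the discussion preceding Definition~\ref{defn:contribution}). Since $U_{g,\Gamma}$ is path connected, pick a path $\vg:[0,1]\to U_{g,\Gamma}$ from $g'_0$ to $g'_1$; by Theorem~\ref{thm:transversality} and the genericity statements of Section~\ref{sec:moduli} (in particular that $\Pcal^{\bullet}_{g'_0,g'_1}$ is Baire and hence dense, together with the local properness established at the start of Section~\ref{sec:generalizations}, which lets us keep the path inside $U_{g,\Gamma}$), we may perturb $\vg$ rel endpoints to a bumpy path still valued in $U_{g,\Gamma}$. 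Then $\vGamma:=\Zcal(\vg)\cap\left([0,1]\times\Ucal\right)$ is compact and open in $\Zcal(\vg)$—compactness because each fiber is compact and the total space is "clamped" between $\Ucal$ and $\Ucal'$, openness because $\Ucal$ is open—so Theorem~\ref{thm:invariance-count-function} gives $n(g'_0,\Gamma'_0)=n(g'_1,\Gamma'_1)$. Independence of the choice of $\Ucal$ itself follows because any two admissible choices $\Ucal,\Ucal''$ both contain $\Gamma$ and both satisfy $\Zcal(g')\cap\Ucal=\Zcal(g')\cap\Ucal''=\Gamma'$ for $g'$ near $g$, so they compute the same integer.

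For the cobordism invariance (the second sentence), the argument is a longer version of the same idea. Let $\vg\in\Gcal_{g_0,g_1}$ (I read $\Gcal_{g_0,g_1}$ as $\Pcal_{g_0,g_1}$) and let $\vGamma\subset\Zcal(\vg)$ be compact and open, with $\Gamma_j=\vGamma\cap\Zcal(g_j)$. Cover the compact set $\vGamma$ by finitely many "tube" neighborhoods of the form $(a_i,b_i)\times\Ucal_i$ coming from the local-properness construction applied at points $(\vg(t),\gamma)$, so that along a partition $0=s_0<s_1<\dots<s_N=1$ the slices $\Gamma_{s}:=\vGamma\cap\Zcal(\vg(s))$ vary inside a fixed open set on each subinterval $[s_{k-1},s_k]$. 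The weight $n(\vg(s),\Gamma_s)$ is then defined for every $s$ via Definition~\ref{defn:contribution}, and on each subinterval it is locally constant: near any $s^*\in[s_{k-1},s_k]$ one compares $n$ at $s^*$ and at nearby $s$ by choosing a single bumpy metric $g'$ in the common neighborhood $U_{\vg(s^*),\Gamma_{s^*}}$ and noting that $\Gamma'=\Zcal(g')\cap\Ucal$ computes the weight for all of them. Hence $s\mapsto n(\vg(s),\Gamma_s)$ is locally constant on $[0,1]$, therefore constant, and $n(g_0,\Gamma_0)=n(g_1,\Gamma_1)$.

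There is one subtlety I would handle carefully rather than gloss over: to run the previous paragraph I must know that for a bumpy metric $g'$ close to $\vg(s^*)$ the set $\Zcal(g')\cap\Ucal$ is \emph{genuinely} the slice of a compact open subset of some bumpy-path cobordism, and that the two presentations (as a perturbation of $\Gamma_{s^*}$ and as a perturbation of $\Gamma_s$) agree. This is where Theorem~\ref{thm:invariance-count-function} is invoked: I connect the two bumpy metrics $g'$ by a bumpy path inside the common neighborhood and apply it to $\vGamma':=\Zcal(\cdot)\cap\Ucal$ along that path. So the whole theorem is really a patching of finitely many applications of Theorem~\ref{thm:invariance-count-function}, glued along overlaps using the Baire-category density of bumpy paths to make consecutive choices compatible.

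\textbf{Main obstacle.} The genuinely delicate point is not the path-gluing but controlling compactness: the map $\pz:\Zcal\to\Gcal$ is \emph{not} locally proper (as emphasised in the introduction), so I must lean on the specific local-properness statement proved at the start of Section~\ref{sec:generalizations}—that near a \emph{fixed} compact open $\Gamma$ one can trap nearby geodesics between $\Ucal$ and $\Ucal'$—and verify that this trapping persists uniformly along a compact family of metrics. Concretely, the hard step is showing that the finitely many tubes $(a_i,b_i)\times\Ucal_i$ can be chosen so that $\vGamma$ stays inside $\bigcup_i (a_i,b_i)\times\Ucal_i$ and never escapes into the complement on any subinterval; this is exactly the "no escape of geodesics to the boundary" estimate, and it is the only place where the compactness of $\vGamma$ (as opposed to mere closedness) is essential. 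Once that uniform trapping is in hand, everything else is a bookkeeping exercise in Definition~\ref{defn:contribution} plus repeated citation of Theorem~\ref{thm:invariance-count-function}.
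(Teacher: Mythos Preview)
Your argument for the first assertion (well-definedness of $n(g,\Gamma)$) matches the paper's exactly: connect two bumpy choices $g_0',g_1'$ by a bumpy path inside $U_{g,\Gamma}$, set $\vGamma=\Zcal(\vg)\cap([0,1]\times\Ucal)$, invoke Theorem~\ref{thm:invariance-count-function}, and handle independence of $\Ucal$ by passing to intersections.

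For the second assertion your route diverges from the paper's. You propose to work slice by slice along $[0,1]$: cover $\vGamma$ by finitely many tubes, show that $s\mapsto n(\vg(s),\Gamma_s)$ is locally constant by comparing nearby slices via a common bumpy metric, then concatenate. This can be made to work, but it is more laborious than necessary, and the ``main obstacle'' you flag (uniform trapping of geodesics along a compact family) is a real issue that your tube-covering argument does not quite resolve without further care about the compatibility of the open sets $\Ucal_s$ and the neighborhoods $U_{\vg(s),\Gamma_s}$ as $s$ varies.

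The paper instead replays the trapping argument from the opening of Section~\ref{sec:generalizations} \emph{once}, but in the path space $\Pcal_{g_0,g_1}$ rather than in $\Gcal$. First, using the well-definedness just established, it reduces to $g_0,g_1\in\Gcal^\bullet$. Then it chooses bounded open sets $\Ucal\subset\overline{\Ucal}\subset\Ucal'$ in $[0,1]\times\Xcal$ with $\vGamma=\Zcal(\vg)\cap\Ucal=\Zcal(\vg)\cap\Ucal'$, and runs the identical contradiction argument (now with sequences of paths $\vg_j\to\vg$) to produce a neighborhood $U$ of $\vg$ in $\Pcal_{g_0,g_1}$ such that $\Zcal(\vg')\cap\Ucal$ is compact and open for every $\vg'\in U$. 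Picking any $\vg'\in U\cap\Pcal^\bullet_{g_0,g_1}$ and applying Theorem~\ref{thm:invariance-count-function} to $\vGamma'=\Zcal(\vg')\cap\Ucal$ finishes the proof in one stroke. So the paper perturbs the entire path globally rather than patching slice-wise comparisons; this absorbs your ``main obstacle'' into a single application of the trapping lemma at the level of paths and eliminates all the tube-covering bookkeeping.
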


\begin{proof}
If $g_0,g_1\in \Gcal^{\bullet }\cap U_{g,\Gamma}$ are a pair of bumpy metrics in $U=U_{g,\Gamma}$, choose a bumpy path $\vg$ in $U$ which connects them. Then 
\[\vGamma=\Zcal(\vg)\cap ([0,1]\times \Ucal)\] 
is open and compact, where $\Ucal=\Ucal_{g,\Gamma}$. If $\Gamma_j=\Ucal\cap\Zcal(g_j)$ for $j=0,1$, Theorem~\ref{thm:invariance-count-function} implies that $n(g_0,\Gamma_0)=n(g_1,\Gamma_1)$. Therefore,  $n(g,\Gamma)$ is independent of the choice of $g'$ in $U_g$. Moreover, the independence from the choice of $U_{g,\Gamma}$ and $\Ucal_{g,\Gamma}$ follows as well, since we may always pass to the intersection of two different selected open subsets.\\

 The above argument also shows that for the second claim, we may further assume that $g_0,g_1\in\Gcal^\bullet$. Since $\vGamma$ is compact and open, there are bounded open sets $\Ucal$ and $\Ucal'$ in $[0,1]\times \Xcal$ so that $\overline{\Ucal}\subset\Ucal'$ and $\vGamma=\Zcal(\vg)\cap\Ucal=\Zcal(\vg)\cap\Ucal'$. As discussed after Definition~\ref{defn:contribution},  there is a neighborhood $U$ of $\vg$ in $\Pcal_{g_0,g_1}$ such that for every other path $\vg'\in U$, $\vGamma'=\Zcal(\vg')\cap\Ucal$ is compact and open. If $\vg'\in\Pcal^\bullet_{g_0,g_1}$, Theorem~\ref{thm:invariance-count-function} implies that $n(g_0,\Gamma_0)=n(g_1,\Gamma_1)$, completing the proof of the theorem.
\end{proof}

\begin{example} Given the real numbers $0<a_0\leq\cdots\leq a_n$,  the $n$-dimensional ellipsoid is defined by
\[M=E_{a_0,\ldots,a_n}=\big\{(x_0,\ldots,x_n)\in\R^{n+1}\ \big|\ \left({a_0x_0}\right)^2+\cdots+\left({a_n x_n}\right)^2=1\big\},\]
and is equipped with the restriction $g=g_{a_0,\ldots,a_n}$ of the Euclidean metric. Choose $a_0,\ldots,a_n$  sufficiently close to $1$. Given $0\leq j<k\leq n$, let $\gamma_{j,k}$ be the geodesic obtained as the intersection  of $M$ with the plane $P_{j,k}$, which is defined as the set of all $(x_0,\ldots,x_n)\in\R^{n+1}$ with $x_j=x_k=0$. If $a_0,\ldots,a_n$ are different, $\gamma_{j,k}$ (parametrized by arc length) are the only simple closed geodesics on $M$ (upto change of orientation). By \cite[Theorem 4.5]{White-1}, $\imath(g,\gamma_{j,k})=n+j+k-2$ and $n_1(g,\gamma_{j,k})=(-1)^{n+k+j}$.\\

When $a_0=\cdots=a_n=1$, we recover the standard metric $g_{S^n}$ on the sphere $S^n\subset \R^{n+1}$. Let $\Gamma_d$ denote the set of $d$-covers of all great circles on $S^n$, which are all the closed geodesics of length $2d\pi$. By perturbing the metric to one of the ellipsoid metrics discussed above we obtain
\[n(g_{S^n},\Gamma_1)
=2\sum_{0\leq j<k\leq n}(-1)^{n+k+j}=2(-1)^{n+1}\left\lfloor\frac{n+1}{2}\right\rfloor=(-1)^{n+1}\chi(\Gamma_1/S^1).\] 
The last equality follows since  $\Gamma_1/S^1$ is the double-cover of the Grassmannian $G(2,n+1)$ (note that the geodesics in $\Gamma_1/S^1$ have a well-defined orientation). The projection map from $\widetilde{\Ycal}$ to $\Gcal$ is proper in a neighborhood of $g_{S^n}$ and its degree is $(-1)^{n+1}\chi(\Gamma_1/S^1)$, by \cite[Theorem 5.1]{White-1}.  The weights $n(g_{S^n},\Gamma_d)$ for $d>1$ may be computed similarly, and are all equal to zero. Therefore, we have
\[\pi_{g_{S^n}}:\R\setminus 2\pi\Z\ra \Z,\quad\quad\pi_{g_{S^n}}(L)=\begin{cases}0& L<2\pi\\ 2(-1)^{n+1}\lfloor(n+1)/2\rfloor& L>2\pi \end{cases}.\]
\end{example}

Suppose that $M$ is a surface of genus $h$ and let $g\in\Gcal$. When $h=0$,  the number $N_g(L)$ of closed $g$-geodesics of length at most $L$ satisfies the following inequality (\cite{KliTak,Moser, Hingston-1} and \cite{Hingston}):
\[\liminf_{L\ra \infty} \ \frac{N_g(L)}{(L/\log(L))}>0.\]
The count function $\pi_g=\pi_M(g,\cdot)$ is different from $N_g$ in this case, unlike the case of $h>1$ and negatively curved $g$, where $\pi_g=N_g$. The {\emph{prime geodesic theorem}} states that $\pi_g$ is asymptotic to $e^{\hslash(g) L}/(\hslash(g) L)$, where $\hslash(g)$ is the topological entropy of  $g$ (\cite{PS} and  \cite{Margulis}). For arbitrary $g\in\Gcal$, it makes sense to ask about the asymptotic behavior of $\pi_g$. The aforementioned results suggest comparing the asymptotics of $\pi_g$  with $e^{\hslash(g) L}/(\hslash(g) L)$.

\end{document}